\documentclass[10pt,a4paper,reqno]{amsart}

\usepackage{graphicx}
\usepackage{mathscinet}
\usepackage[backend=biber,sorting=nty,natbib=true,isbn=false,doi=false,url=false,maxbibnames=5,style=ieee]{biblatex}
\usepackage[english]{babel}
\usepackage{csquotes}

\usepackage{microtype} 
\usepackage[title,titletoc,page]{appendix}
\usepackage[a4paper,left=3cm,right=3cm,top=2cm,bottom=4cm,bindingoffset=5mm]{geometry}
\usepackage[colorlinks=true,linkcolor=blue,linktocpage=true]{hyperref}

\usepackage{amsfonts}
\usepackage{amsmath}
\usepackage{amssymb}
\usepackage{dsfont}
\usepackage{mathtools}
\mathtoolsset{showonlyrefs}

\usepackage{thmtools}
\usepackage[shortlabels]{enumitem}

\usepackage{version}
\usepackage[obeyFinal]{todonotes}
\setuptodonotes{inline}
\usepackage{cancel}
\usepackage[normalem]{ulem}

\numberwithin{equation}{section}
\usepackage{bbm}
\usepackage{bm}
\usepackage[normalem]{ulem}
\usepackage{cancel}

\newcommand{\cA}{\mathcal{A}}
\newcommand{\cB}{\mathcal{B}}
\newcommand{\cC}{\mathcal{C}}
\newcommand{\cD}{\mathcal{D}}

\newcommand{\cF}{\mathcal{F}}

\newcommand{\cK}{\mathcal{K}}
\newcommand{\cL}{\mathcal{L}}

\newcommand{\cN}{\mathcal{N}}

\newcommand{\cP}{\mathcal{P}}

\newcommand{\cU}{\mathcal{U}}

\newcommand{\bE}{\mathbb{E}}

\newcommand{\bP}{\mathbb{P}}

\newcommand{\bR}{\mathbb{R}}

\newcommand{\dd}{\, \mathrm{d}}
\def\eps{\varepsilon}

\newcommand{\lipn}[1]{\left\| #1 \right\|_{\mathrm{Lip}}}

\newcommand{\ip}[2]{\! \left\langle #1,#2 \right\rangle \!}

\newcommand{\UCb}{\mathrm{UC}_{b}}
\newcommand{\LUC}{\mathrm{LUC}_b}

\newcommand{\Hbar}{\overline{H}}

\newcommand{\Hmin}{H_{\mathrm{min}}}
\newcommand{\Hcv}{H_{\mathrm{CV}}}

\newtheorem{theorem}{Theorem}[section]
\newtheorem{lemma}[theorem]{Lemma}

\newtheorem{proposition}[theorem]{Proposition}

\theoremstyle{definition}
\newtheorem{definition}[theorem]{Definition}
\newtheorem{remark}[theorem]{Remark}

\newtheorem{hypothesis}[theorem]{Hypothesis}

\title[PEL and Stationary HJB Equations]{Projected Evolutionary Lifting and Well-Posedness of Stationary Hamilton--Jacobi--Bellman Equations in Infinite Dimensions}
\author{Gabriele Bolli}
\address{Dipartimento di Matematica Guido Castelnuovo, Universit\`a degli Studi di Roma La Sapienza, Roma, Italy}
\email{gabriele.bolli@uniroma1.it}

\author{Fabian Fuchs}
\address{Department of AI, Data and Decision Sciences, Luiss Guido Carli, Roma, Italy}
\email{ffuchs@luiss.it}
\date{\today}

\thanks{The author are supported by the Italian Ministry of University and Research
(MIUR) within the framework of PRIN project 20223PNJ8K}

\begin{document}

%%%%%%%%%%%%%%%%%%%%%%%%%%%%%%%%%%%%%%%%%%%%
\begin{abstract}
This paper establishes the existence and uniqueness of mild solutions to stationary Hamilton-Jacobi-Bellman (HJB) equations associated with infinite-horizon stochastic optimal control problems in separable Hilbert spaces. Our framework includes settings with a lack of global smoothing properties of the transition semigroup, singular dynamics involving unbounded control operators, and state-dependent running costs. We overcome these challenges by lifting the state space using the Projected Evolutionary Lifting technique. This work is an extension of \cite{BoGo25}, in which existence and uniqueness is proved via a contraction mapping argument and is consequently restricted to sufficiently large discount factors. We remove this restriction, proving existence and uniqueness for any discount rate $\lambda > 0$ using tools from the theory of maximally monotone operators.\smallskip \ \\
\textit{Keywords:} stochastic optimal control, unbounded control operators, Hamilton--Jacobi--Bellman equations, partial smoothing, Projected Evolutionary Lifting. \smallskip \ \\
\textit{MSC 2020:} Primary: 93E20; 47H20; Secondary: 60H15; 35R60
\end{abstract}

\maketitle
%%%%%%%%%%%%%%%%%%%%%%%%%%%%%%%%%%%%%%%%%%%%

\section{Introduction}

This paper deals with a class of stationary Hamilton--Jacobi--Bellman (HJB) equations arising from infinite time horizon stochastic optimal control problems in separable Hilbert spaces. The primary challenges in the investigation of such problems stem from three fundamental structural features: 
First, the lack of smoothing properties of the transition semigroup associated with the free evolution dynamics requires us to work with partial smoothing results. To attain the partial smoothing results we need, in turn, to lift the state in a suitable fashion.
Moreover, this lifting allows us to treat running costs that are state-dependent.
Finally, the presence of singular dynamics, arising, for instance, from boundary control problems, which involve unbounded control and/or diffusion operators require working in a control extension of the underlying Hilbert space. 
The present work is an extension of \cite{BoGo25}, which addresses the same problem by considering its mild form and finding a unique fixed point under the assumption of a sufficiently large discount factor $\lambda > 0$. We extend that result, establishing the existence and uniqueness of mild solutions for any discount rate $\lambda > 0$.

The complexities of our framework prevent the direct application of standard results available in the literature based on fixed point theorems on spaces of continuous functions or Gauss--Sobolev spaces as developed, for instance, in \cite{CadP91, CadP92, DPZa02, Go96} and \cite{ChMe97, GoGo06}, respectively. 
Viscosity solution theory, see, e.g., \cite[Chapter 3]{FaGoSw17}, can be used to prove existence and uniqueness of solutions. However, such solutions are usually merely continuous, which is a significant limitation in this context, as one needs at least differentiability of the solution in the space variable to prove the existence of optimal feedback control strategies through the dynamic programming approach. 
Lastly, not even BSDE-based techniques, see, e.g., \cite{FuTe02, FuTe04, FuTe2002} or \cite[Chapter 6]{FaGoSw17}, can be applied in our context due to the lack of the so-called `structure condition', which requires the control operator to act at most in the directions where the noise acts. This condition is violated in all examples we consider later. 

Consequently, more sophisticated techniques based on partial smoothing properties of the transition semigroup have been introduced. The seminal work \cite{GoMa17} investigates directional regularization properties for specific classes of functions, leading to the resolution of the finite-horizon problem in the absence of global smoothing; see also \cite{MaTe22,bodefe26} for similar results in this direction. This partial smoothing framework was subsequently extended by \cite{GoMa23} to include singular dynamics with unbounded control operators. Further refinements were achieved in \cite{GoMa25}, where the authors treat state-dependent costs by lifting the state space to a suitable space of trajectories and showing that a partial smoothing property holds in such a space. 
In \cite{BoGo25}, the existence and uniqueness of mild solutions for stationary HJB equations is proved using a contraction mapping principle, exploiting the fact that, for sufficiently large $\lambda > 0$, the integral operator associated with the equation is a contraction. While significant, this result is notably restrictive from the perspective of economic and financial applications, where the discount factor is typically an exogenously given parameter, such as an interest rate, rather than a variable depending on other model data. This work bridges this gap by extending the results of \cite{BoGo25} to any discount factor $\lambda > 0$.

The results in this paper rely on the theory of maximally monotone operators on Banach spaces, cf.\ \cite{barbu2010}, and are inspired by the techniques presented in \cite[Section 4.6.2.2]{FaGoSw17}. 
However, the complex structural assumptions of our framework, like the singular dynamics, unbounded control operators, and lack of global smoothing, prevent a straightforward application of those classical results.

These strict requirements motivate a deeper investigation of the concept of what we call \textit{Projected Evolutionary Lifting} (PEL), a technique initially introduced in \cite{GoMa25} and subsequently expanded in \cite{BoGo25}.\ The core intuition behind the PEL is to bypass the lack of regularity in the original state space by lifting the problem to a suitable space of trajectories.\ In this lifted framework, relevant functions do not depend on the state statically, but rather through the entire evolutionary trajectory generated by the free dynamics and filtered by a suitable projection. While the transition semigroup of the underlying problem may fail to smooth functions globally, one can show a partial smoothing effect along specific directions using this lifting.

Accordingly, the contributions of this paper are mainly twofold and linked to this methodology: On one hand, we extend the existing existence and uniqueness results to arbitrary discount factors $\lambda > 0$, removing this restrictive assumption found in prior literature. On the other hand, to adapt the classical monotone operator techniques to our singular setting, we refine the methodology of the Projected Evolutionary Lifting.

We remark that the Hamiltonian here is assumed to be Lipschitz continuous and concave, though not necessarily differentiable. While these hypotheses allow for the study of control problems where the Hamiltonian lacks regularity, the concavity assumption excludes the analysis of equations arising from, for instance, differential games.

The rest of the paper is organized as follows:
In Section \ref{sec:setup}, we introduce the notation and underlying stochastic control problem as well as the relevant structural assumptions. 
Section \ref{sec:struct_prop} introduces the Projected Evolutionary Lifting and discusses its properties.
In Section \ref{sec:pseudo_resolvent_extention}, we present the main result of the paper, Theorem \ref{thm:ext_all_lambda}, which is an existence and uniqueness result for solutions to the HJB equations for any $\lambda > 0$.
In Section \ref{sec:applications}, we apply our results to controlled stochastic wave equations and the stochastic heat equation with boundary control.
\section{Preliminaries and Setup}\label{sec:setup}
Throughout, let $H$ be a separable Hilbert space and $\Hbar$ a separable Banach space that is a control extension of $H$, i.e., we have $H \subseteq\Hbar$ such that $H$ is dense in $\Hbar$ and the inclusion $H \hookrightarrow \Hbar$ is continuous.
Furthermore, consider another separable Hilbert space $K$ acting as the control space.
Let $(\Omega, \cF, (\cF_t)_{t\geq0}, \bP)$ be a complete, filtered probability space satisfying the usual assumptions and let the separable Hilbert space $\Xi$ be the noise space.

For two separable Banach spaces $E_1$ and $E_2$, let $\cL(E_1, E_2)$ be the set of linear, bounded operators from $E_1$ to $E_2$. For an operator $A \in \cL(E_1, E_2)$, we denote its formal adjoint operator as $A^* \in \cL(E_1^*, E_2^*)$, where, if $E_1$ or $E_2$ are Hilbert spaces, we identify their dual space with themselves.

Let $B \in \mathcal{L}(K, E_1)$. Then, a function $f\colon E_1 \to E_2$ is \textit{$B$-directionally differentiable} at $x \in E_1$ in the direction $k \in K$ if the limit $\nabla^B f(x;\, k) = \lim_{s \to 0} \frac{f(x + sBk) - f(x)}{s}$ exists in $E_2$. 
The function is \textit{$B$-Gâteaux differentiable} if $k \mapsto \nabla^B f(x;k)$ is an element of $\mathcal{L}(K, E_2)$, and \textit{$B$-Fréchet differentiable} if the convergence is uniform for $\|k\|_K \leq 1$. 

For a separable Banach space $E$, we denote by $B_b(E)$ the space of bounded, Borel measurable functions $f\colon E \to \bR$ equipped with the supremum norm $\|f \|_{\infty} = \sup_{x \in E} |f(x)|$.
Moreover, we write $C(E)$ for the space of continuous real-valued functions on $E$ and denote the space of bounded continuous functions as $C_b(E)$. 

Let $C^{1}_b(E)$ be the set of all Fr\'echet differentiable functions with bounded, continuous derivatives.
Furthermore, we denote by $C^{1,B}_b(E)$ the set of functions in $C_b(E)$ that are $B$-Fréchet differentiable.
Analogously, we denote the space of uniformly continuous, bounded functions as $\UCb(E)$ and the set of uniformly continuous, bounded functions with uniformly continuous and bounded $B$-Fr\'echet differential as $\UCb^{1,B}(E)$.

\subsection{Setup of the Control Problem}
Throughout, we consider an operator $A$ generating a strongly continuous semigroup $\{ e^{tA} \}_{t\geq 0}$ on $H$, which can be extended to a strongly continuous semigroup $\{  \overline{e^{tA}} \}_{t\geq 0}$ on $\Hbar$ and is of type $\omega$, i.e., there exist constants $M \ge 1$ and $\omega \in \mathbb{R}$ such that $\|\overline{e^{tA}}\|_{\mathcal{L}(\overline{H})} \le M e^{\omega t}$ for all $t \ge 0$. Additionally, we consider an unbounded control operator $B\in \cL(K, \Hbar)$ and diffusion operator $G\in\cL(\Xi,H)$ such that the covariance operator $Q_t\coloneqq\int_0^t e^{sA}GG^*e^{sA^*}\dd s$ is trace-class for all $t>0$. 

We are then interested in control problems of the $\Hbar$-valued controlled Ornstein-Uhlenbeck (OU) process
\begin{equation}\label{eq:ctrl_spde}
\begin{cases}
    \dd X(s)= AX(s) \dd s+ Bu(s) \dd s +G \dd W(s), \quad s\in(0,\infty),\\
    X(0)=x\in H,
\end{cases}
\end{equation}
where $W$ is a $\Xi$-valued cylindrical Wiener process on the underlying filtered probability space $(\Omega, \cF, (\cF_t)_{t\geq0}, \bP)$ and, for a closed and bounded subset $U\subseteq K$, the control $u$ is taken from the space of admissible controls
\begin{equation}\label{eq:adm_ctrls}
    \cU\coloneqq\big\{ u\colon [0,\infty)\times \Omega \to U \;\big|\; u \text{ is progressively measurable} \big\}.
\end{equation}

Then, the mild solution to the controlled SPDE in equation \eqref{eq:ctrl_spde} is given by the implicit equation
\begin{equation}\label{eq:mild_sol_ctrl_spde}
    X(s) = e^{sA}x+\int_0^s\overline{e^{(s-r)A}}B u(r) \dd r +\int_0^s e^{(s-r)A}G \dd W(r).
\end{equation}
For a time $t\geq0$, initial state $x\in H$, and control $u\in \cU$, we denote the mild solution as $X(t;\, x,u)$.

As usual, the uncontrolled Ornstein-Uhlenbeck (OU) process $Z(\cdot \,;\, x)$ on $H$ is the mild solution to the equation
\begin{equation}\label{eq:ou_equation}
    \begin{cases}
        \dd Z(s) = AZ(s) \dd t + G \dd W(s), \quad s\in(0,\infty),\\
        Z(0)=x \in H,
    \end{cases}
\end{equation}
given explicitly by $Z(t;\, x)= e^{tA}x + W_{A}(t)$, where $W_{A}(t) \coloneqq \int_{0}^te^{(t-r)A}G \dd W(r)$,
with the associated transition semigroup acting on $B_{b}(H)$ given by
\begin{equation}\label{eq:ou_semigroup}
    P_{t}[\phi](x) \coloneqq \bE \big[ \phi\big(Z(t;\, x)\big) \big] = \int_{H} \phi\Big( e^{tA}x+y \Big) \,\cN(0,Q_{t})(\dd y)
\end{equation}
for $t\geq 0$ and $x\in H$.

Note that $P_t$ admits a canonical extension to $\Hbar$ that we, with a slight abuse of notation, also denote as $P_t$, cf.\ \cite[Corollary 3.2]{BoGo25}.

Having established the underlying dynamical system, we then consider the following cost functional to minimize:
First, let $\ell_0\colon \Hbar \to \bR$ be a cost associated with the state variable. To that end let $\ell_0$ be measurable and bounded. 
Furthermore, consider the cost of the control $\ell_1\colon U \to \bR$ being measurable and bounded.
Then, for some discount factor $\lambda>0$, the cost functional to be minimized is
\begin{equation}\label{eq:ctrl_cost_fctl}
J(x;\, u) \coloneqq \mathbb{E} \bigg(\int_0^\infty e^{-\lambda s}\Big[\ell_0\big(X(s; x, u)\big) + \ell_1\big(u(s)\big)\Big] \dd s \bigg).
\end{equation}
Note that, given the conditions on $\ell_0$ and $\ell_1$, the cost functional \eqref{eq:ctrl_cost_fctl} is well-defined and bounded from below.
Thus, the value function of the optimal control problem 
\begin{equation}\label{eq:ctrl_val_fct}
V(x)\coloneqq \inf_{u\in \mathcal{U}} J(x;\, u)
\end{equation}
is also well-defined for any $x \in \Hbar$.

\begin{remark}
    The considered control problem can be adapted to the case in which $\ell_{0}$ has polynomial growth in a straightforward manner. However, this adaptation would require some additional assumptions like the semigroup being of negative type and a change of topology to work on a suitable weighted space. 
\end{remark}

\begin{remark}
    Note that we define the running cost $\ell_{0}$ directly on the extended space $\overline{H}$ to ensure that $J(x;u)$ and $V(x)$ are well-defined even if the trajectories exit $H$. 
\end{remark}

\section{Projected Evolutionary Lifting and its Properties}\label{sec:struct_prop}

In this section, we introduce and discuss the properties of the Projected Evolutionary Lifting. Furthermore, we provide results that allow is to study our control problem more closely.

\subsection{Projected Evolutionary Lifting}\label{sec:pel}

We now turn to the Projected Evolutionary Lifting (PEL), which is a central concept of study in this work and previously had been introduced in \cite{GoMa25,BoGo25}. Intuitively, for each element $x \in \Hbar$, the PEL is a part of the trajectory along the semigroup generated by $A$, i.e., for some selection operator $\cP$, the lifting is a trajectory $t \mapsto \cP e^{tA} x$.

More formally, let $\cP \in \cL(H, H)$ and assume that, for any $t>0$, the operator $\cP e^{tA} \colon H\rightarrow H$ can be continuously extended to an operator $\overline{\cP e^{tA}} \colon \Hbar\rightarrow H$. Note, that under this assumption the map $t \mapsto \overline{\cP e^{tA}} x$ is continuous in $H$
and the identity $\overline{\cP e^{tA}} x= \overline{\cP e^{sA}} \cdot \overline{e^{(t-s)A}} x$ holds, cf.\ \cite[Lemma 2.8]{GoMa25}.

Given this regularity, we define the space of admissible paths.

\begin{definition}\label{def:path_space}
Let $A$ be the generator of a strongly continuous semigroup and $\cP \in \cL(H,H)$ a selection operator such $\cP e^{tA} \colon H\rightarrow H$ can be continuously extended to $\overline{\cP e^{tA}} \colon \Hbar\rightarrow H$. Then, we define the \emph{space of admissible paths} as
\begin{equation}
    \cC^{\cP}_A(0,\infty;\, H) \coloneqq \Big\{ f\in C (0,\infty;\, H) \;\Big|\; f(t) = \overline{\cP e^{tA}} x \text{ for all } t>0 \text{ and an } x\in \Hbar \Big\}.
\end{equation}
When the co-domain is clear from context, we simply denote the space as $\cC^{\cP}_A$.
\end{definition}

\begin{definition}[Projected Evolutionary Lifting]
    Let $A$ be the generator of a strongly continuous semigroup and $\cP \in \cL(H,H)$ a selection operator such $\cP e^{tA} \colon H\rightarrow H$ can be continuously extended to $\overline{\cP e^{tA}} \colon \Hbar\rightarrow H$.
    Then, the \emph{Projected Evolutionary Lifting} (PEL) of $x\in \overline{H}$ is the trajectory $t \mapsto y^P_{x}(t)=\overline{Pe^{tA}}x$, which is an element of the set of admissible paths $\cC^P_A((0,\infty);\, H)$.
    We denote the \emph{lifting map} as $\Upsilon_\infty^{\cP}(x) = y^{\cP}_x$.
\end{definition}

\begin{remark}
    As shown in \cite[Lemma 2.10]{GoMa25}, the lifting map $\Upsilon_\infty^{\cP}(x)$ is surjective and continuous when the path space $\cC^{\cP}_A$ is equipped with the topology of uniform convergence on compact sets.
\end{remark}

As usual in infinite time horizon problems, we are interested in exponentially discounted trajectories. More specifically, we are interested in trajectories from the following weighted Hilbert space.

\begin{definition}\label{def:L2_rho}
For some constant $\rho>0$, let
    \begin{equation}
        L^2_\rho(0,\infty;\, H) \coloneqq \bigg\{ f\colon(0,\infty)\to H \;\bigg|\; \|f\|_{L_\rho^2}^2 \coloneqq \int_0^{\infty} e^{-\rho t} \left| f(t)\right|^2_H \dd t < \infty \bigg\}.
    \end{equation}
If the co-domain is clear from context, we simply write $L^2_\rho$.
\end{definition}

To ensure that our lifted trajectories are in $L^2_\rho$, we need to impose some assumption on the growth of the trajectories. Thus, we require the semigroup to satisfy an exponential stability-type condition, i.e., we assume there exist $\omega\in\bR$ and $\eta \in [0,1/2)$ such that $\big\| \overline{Pe^{tA}} x \big\|_H\leq e^{\omega t} t^{-\eta} \|x\|_{\overline{H}}$ holds for all $x \in \overline{H}$.
In this case, \cite[Lemma 2.13]{GoMa25} shows that the lifting map $\Upsilon^P_{\infty}$ is continuous embedding from $\overline{H}$ into $L_\rho^2(0,\infty;\, H)$.
Note that, unlike in finite-horizon problems, where continuous trajectories are naturally bounded, stationary dynamics necessitate some damping weight $e^{-\rho t}$ to arrest infinite-horizon growth, thereby preserving the topological well-posedness of $\Upsilon^P_\infty$.

Using the tools above, we can now define the class of liftable functions.
\begin{definition}\label{def:SP}
We define the space of real-valued \emph{liftable function} on $\Hbar$ to be
\begin{equation}\label{def-funz-traj}
    \mathcal{S}^{\cP}_\infty(\overline{H}) \coloneqq \Big\{\phi \colon \Hbar \to \bR \;\Big|\;  \text{there exists } \hat{\phi}\in B_{b}(\mathcal{C}^{\cP}_A(0,\infty;\, H)) \text{ s.t. } \phi(x)=\hat{\phi}(y^{\cP}_{x}) \text{ for all } x \in \Hbar \Big\}.
\end{equation}
\end{definition}

\begin{remark}\label{rem:first_props}
Given Definition \ref{def:SP}, we immediately have the following properties:
\begin{enumerate}[(i)]
    \item The inclusion $\mathcal{S}^{\cP}_{\infty}(\overline{H}) \subseteq B_b(\overline{H})$ holds.

    \item\label{item:rem:first_props} Every $\phi\in \mathcal{S}^{\cP}_\infty(\overline{H})$ admits the decomposition $\phi=\hat \phi \circ \Upsilon^{\cP}_\infty$.
\end{enumerate}
\end{remark}

Later, the set of liftable functions that are bounded and uniformly continuous play a key role, for which we introduce the following notation.
\begin{definition}\label{def:liftable_buc}
    We denote the set of liftable, uniformly continuous, and bounded functions on $\Hbar$ as
    \begin{equation}
        \LUC(\Hbar) \coloneqq \UCb(\overline{H}) \cap \mathcal{S}^{\cP}_\infty(\overline{H})
    \end{equation}
    and, analogously,
    \begin{equation}
        \LUC^{1, B} (\Hbar) \coloneqq \UCb^{1,B}(\overline{H}) \cap \mathcal{S}^{\cP}_\infty(\overline{H})
    \end{equation}
    for the set of liftable, uniformly continuous, bounded, and $B$-Fr\'echet differentiable functions.
\end{definition}

\begin{remark}
    We want to point out that, in general, Definition \ref{def:liftable_buc} is a proper intersection as can be seen from the following example:
    Let $H = \Hbar = \bR^2$, $A=0$, and $\cP$ be the projection on the second component, i.e., $\cP(x) = \pi_2(x)$.
    Then, $\Upsilon^{\pi_2}_\infty (x)$ is the constant function $x_2$.
    Now, consider $\phi (x) = \sin (x_1)$, which is an element of $\UCb(\bR^2)$. However, we cannot find any $\hat\phi \in B_b(\cC^{\pi_2}_A(0, \infty;\, \bR^2))$ such that $\phi (x) = \hat\phi \circ \Upsilon^{\pi_2}_\infty (x)$ as this would imply that $\phi$ only depends on $x_2$.
\end{remark}

\subsection{First Properties}

First, we recall some important properties of the set of liftable functions $\mathcal{S}^{\cP}_\infty(\Hbar)$, which have been shown in \cite[Proposition 2.15]{GoMa25}.

\begin{proposition}\label{prop-adjoint}
Consider the PEL as constructed in Section \ref{sec:pel}. Then:
\begin{enumerate}[(i)]
    \item\label{item:prop-adjoint} If $\cP$ extends continuously to $\overline{{\cP}}:\overline{H}\to H$ with $\operatorname{Im} \overline{{\cP}}=\operatorname{Im} {\cP}$, then any function in $B_b^{\cP}(H)$ can be extended to a function in $\mathcal{S}^{\cP}_\infty(\Hbar)$ by setting $\phi(x) = \hat\phi (\overline\cP x)$ for any $x \in \Hbar$. If $\cP$ also commutes with $A$, this extension is an isomorphism, i.e., $B_b^{\cP}(H) \cong  \mathcal{S}^{\cP}_\infty(\Hbar)$.
    \item The adjoint operator $(\Upsilon^{\cP}_\infty)^* \colon L^2_\rho(0,\infty;H) \to H$ is explicitly given by $(\Upsilon^{\cP}_\infty)^*z(\cdot) = \int_{0}^{\infty}e^{-\rho s}e^{sA^*}{\cP}^*z(s) \dd s$.
\end{enumerate}
\end{proposition}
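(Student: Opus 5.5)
We prove the two items separately. Here $B_b^{\cP}(H)$ denotes the bounded Borel functions on $H$ of the form $\phi(x)=\hat\phi(\cP x)$ with $\hat\phi\in B_b(\operatorname{Im}\cP)$, in line with \cite{GoMa25}.

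\textbf{Item (i).} Take $\phi=\hat\phi\circ\cP\in B_b^{\cP}(H)$ and set $\bar\phi(x)\coloneqq\hat\phi(\overline{\cP}x)$ for $x\in\Hbar$.

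1. The definition makes sense because $\operatorname{Im}\overline{\cP}=\operatorname{Im}\cP$, so $\overline{\cP}x$ lies in the domain of $\hat\phi$. Moreover $\bar\phi$ is bounded and Borel, since $\overline{\cP}$ is continuous, and it agrees with $\phi$ on $H$.

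2. To see that $\bar\phi\in\mathcal{S}^{\cP}_\infty(\Hbar)$, we need a bounded Borel functional $\Psi$ on $\cC^{\cP}_A$ with $\bar\phi(x)=\Psi(y^{\cP}_x)$. The natural candidate is to recover $\overline{\cP}x$ from the path $y^{\cP}_x$. By the strong continuity of $\overline{e^{tA}}$ on $\Hbar$ and the continuity of $\overline{\cP}$,
\[
y^{\cP}_x(t)=\overline{\cP e^{tA}}x=\overline{\cP}\,\overline{e^{tA}}x\longrightarrow\overline{\cP}x\qquad (t\downarrow0)\ \text{in } H.
\]
So we set
\[
\Psi(f)\coloneqq\hat\phi\Big(\lim_{n\to\infty}f(1/n)\Big)
\]
where the limit exists, and $\Psi(f)\coloneqq0$ otherwise. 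This $\Psi$ is Borel, being a pointwise limit of continuous evaluations composed with a Borel function; a $\limsup$-type construction handles measurability of the set where the limit exists. It is also bounded, and $\Psi(y^{\cP}_x)=\bar\phi(x)$.

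3. Now assume $\cP A\subseteq A\cP$, so that $\cP e^{tA}=e^{tA}\cP$ and hence $\overline{\cP e^{tA}}=e^{tA}\overline{\cP}$. For surjectivity, take $\phi=\hat\phi'\circ\Upsilon^{\cP}_\infty\in\mathcal{S}^{\cP}_\infty(\Hbar)$. Then $y^{\cP}_x=(t\mapsto e^{tA}\overline{\cP}x)$ depends only on $\overline{\cP}x$, so $\phi(x)=g(\overline{\cP}x)$ with $g(p)\coloneqq\hat\phi'(e^{\cdot A}p)$. The map $p\mapsto e^{\cdot A}p$ is continuous into paths, so $g$ is Borel.

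4. For injectivity, suppose two elements of $B_b^{\cP}(H)$ have the same extension. Then they agree on $H$, so they are equal. The extension map is linear and, by step 1, preserves the sup norm. Hence it is an isometric isomorphism onto $\mathcal{S}^{\cP}_\infty(\Hbar)$.

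\textbf{Item (ii).} Here we compute the adjoint directly. Take $x\in H$ and $z\in L^2_\rho$. Then
\[
\ip{\Upsilon^{\cP}_\infty x}{z}_{L^2_\rho}=\int_0^\infty e^{-\rho s}\ip{\cP e^{sA}x}{z(s)}_H\dd s=\Big\langle x,\int_0^\infty e^{-\rho s}e^{sA^*}\cP^*z(s)\dd s\Big\rangle_H .
\]
The interchange of the inner product and the integral requires the Bochner integral on the right to converge in $H$. This follows from Cauchy--Schwarz and the growth bound $\|\cP e^{sA}\|\le e^{\omega s}s^{-\eta}$, which transfers to the adjoint $e^{sA^*}\cP^*$:
\[
\int_0^\infty e^{-\rho s}\big\|e^{sA^*}\cP^*z(s)\big\|\dd s\le\Big(\int_0^\infty e^{-\rho s}e^{2\omega s}s^{-2\eta}\dd s\Big)^{1/2}\|z\|_{L^2_\rho}.
\]
The first factor is finite because $\eta<1/2$ and we take $\rho>2\omega$.

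\textbf{Main obstacle.} The delicate step is the measurability in step 2 of item (i). Showing that the limit functional $\Psi$ is Borel on the path space with its compact-uniform topology needs some care. An alternative is to define $\Psi$ only on the image of $\Upsilon^{\cP}_\infty$ and extend it by zero, since the lifting map is surjective onto $\cC^{\cP}_A$ anyway.
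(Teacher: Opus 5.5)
Your argument is correct. The paper gives no proof of this proposition; it imports it from \cite[Proposition 2.15]{GoMa25}. So there is no in-paper route to compare against, and your write-up works as a self-contained proof. Its two ingredients are the natural ones:
\begin{itemize}
\item recovering $\overline{\cP}x=\lim_{t\downarrow 0}y^{\cP}_x(t)$ through $\overline{\cP e^{tA}}=\overline{\cP}\,\overline{e^{tA}}$;
\item a direct duality computation for $(\Upsilon^{\cP}_\infty)^*$.
\end{itemize}
You should justify the identity $\overline{\cP e^{tA}}=\overline{\cP}\,\overline{e^{tA}}$ explicitly: both sides are continuous maps $\Hbar\to H$ that agree on the dense subspace $H$.

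A few details should be tightened.

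\textbf{Growth bound in (ii).} The paper states the growth bound in the $\Hbar$-norm, namely $\|\overline{\cP e^{tA}}x\|_H\le e^{\omega t}t^{-\eta}\|x\|_{\Hbar}$. Restricted to $H$ this gives $\|\cP e^{sA}\|_{\cL(H)}\le c\,e^{\omega s}s^{-\eta}$, where $c$ is the norm of the embedding $H\hookrightarrow\Hbar$. The constant is harmless, but it should appear.

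\textbf{Step 3.} You should say why $g$ is defined on all of $\operatorname{Im}\cP$, i.e. why $e^{\cdot A}p\in\cC^{\cP}_A$ there. Write $p=\cP h$ with $h\in H$; then $e^{tA}p=\cP e^{tA}h=y^{\cP}_h(t)$.

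\textbf{Step 4.} Sup-norm preservation follows from $\operatorname{Im}\overline{\cP}=\operatorname{Im}\cP$, not from Step 1.

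\textbf{Measurability.} With the topology of uniform convergence on compacts, as stated in the paper, your evaluation argument is fine. Moreover, the limit $\lim_n f(1/n)$ exists for every $f\in\cC^{\cP}_A$, so neither the ``otherwise $0$'' clause nor the $\limsup$ construction is ever needed.

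The paper, however, later treats lifted functions as Borel functions on $L^2_\rho$ (Lemma \ref{lemma:Tlambda_in_Sp}, Proposition \ref{prop-lift-partial-smoothing}), where point evaluations are not continuous. To make your construction robust under that reading, replace $f(1/n)$ by the averages $m\int_{1/n}^{1/n+1/m}f(s)\dd s$. These are continuous from $L^2_\rho$ to $H$, and they converge to $f(1/n)$ for the continuous paths in $\cC^{\cP}_A$.

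Your suggested alternative, defining $\Psi$ on the image of $\Upsilon^{\cP}_\infty$ and extending by zero, does not address the issue. That image is already all of $\cC^{\cP}_A$, and the question is precisely measurability on it.
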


The next result deals with the completeness of the space of liftable functions $\mathcal{S}^{\cP}_\infty(\Hbar)$.
\begin{proposition}
The space $\mathcal{S}^{\cP}_\infty(\Hbar)$ is a Banach space under the supremum norm.
\end{proposition}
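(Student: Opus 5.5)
Since $\mathcal{S}^{\cP}_\infty(\Hbar) \subseteq B_b(\Hbar)$ by Remark \ref{rem:first_props}, and $B_b(\Hbar)$ with the supremum norm is a Banach space, it suffices to show that $\mathcal{S}^{\cP}_\infty(\Hbar)$ is a closed linear subspace of $B_b(\Hbar)$.

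\emph{Linear subspace.} If $\phi = \hat\phi\circ\Upsilon^{\cP}_\infty$ and $\psi = \hat\psi\circ\Upsilon^{\cP}_\infty$ with $\hat\phi,\hat\psi \in B_b(\cC^{\cP}_A)$, then for scalars $a,b$ we have $a\phi+b\psi = (a\hat\phi+b\hat\psi)\circ\Upsilon^{\cP}_\infty$. Moreover, $a\hat\phi+b\hat\psi$ is again bounded and Borel on $\cC^{\cP}_A$.

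\emph{Closedness.} Let $\phi_n = \hat\phi_n\circ\Upsilon^{\cP}_\infty$ converge uniformly on $\Hbar$ to some $\phi\in B_b(\Hbar)$. The key observation is that, because the lifting map $\Upsilon^{\cP}_\infty\colon \Hbar\to\cC^{\cP}_A$ is surjective (\cite[Lemma 2.10]{GoMa25}), we have
\begin{equation}
\|\hat\phi_n-\hat\phi_m\|_{\infty} = \sup_{y\in\cC^{\cP}_A}|\hat\phi_n(y)-\hat\phi_m(y)| = \sup_{x\in\Hbar}|\phi_n(x)-\phi_m(x)| = \|\phi_n-\phi_m\|_\infty.
\end{equation}
Hence $(\hat\phi_n)$ is Cauchy in $B_b(\cC^{\cP}_A)$. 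This space is complete, since uniform limits of bounded Borel functions are bounded and Borel, where the Borel structure comes from the topology placed on $\cC^{\cP}_A$. So $\hat\phi_n\to\hat\phi$ uniformly for some $\hat\phi\in B_b(\cC^{\cP}_A)$. Then for every $x$ we get $\hat\phi(y^{\cP}_x)=\lim_n\hat\phi_n(y^{\cP}_x)=\lim_n\phi_n(x)=\phi(x)$, so $\phi\in\mathcal{S}^{\cP}_\infty(\Hbar)$.

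\emph{Main obstacle.} The only delicate point is making sure the representative $\hat\phi_n$ is well defined, so that the isometry above holds. Surjectivity of $\Upsilon^{\cP}_\infty$ guarantees that $\hat\phi_n$ is uniquely determined by $\phi_n$ on all of $\cC^{\cP}_A$. Without surjectivity one would instead extend by zero off the image, which is harmless only if the image is a Borel set. Surjectivity avoids this issue entirely, and I would also note the resulting isometric isomorphism $\mathcal{S}^{\cP}_\infty(\Hbar)\cong B_b(\cC^{\cP}_A)$.
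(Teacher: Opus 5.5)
Your proof is correct and follows essentially the same route as the paper: surjectivity of $\Upsilon^{\cP}_\infty$ makes $\hat\phi\mapsto\hat\phi\circ\Upsilon^{\cP}_\infty$ an isometry, so a Cauchy sequence in $\mathcal{S}^{\cP}_\infty(\Hbar)$ gives a Cauchy sequence in the complete space $B_b(\cC^{\cP}_A)$, and the limit there pulls back to a limit in $\mathcal{S}^{\cP}_\infty(\Hbar)$. The differences are only in framing: you phrase the argument as closedness of a linear subspace of $B_b(\Hbar)$, and you verify linearity explicitly, which the paper leaves implicit.
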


\begin{proof}
Let $(\phi_n)_{n \in \mathbb{N}}$ be a Cauchy sequence in $\mathcal{S}^{\cP}_\infty(\overline{H})$. By Remark \ref{rem:first_props} \ref{item:rem:first_props}, there exists a sequence $\hat{\phi}_n \in B_b(\mathcal{C}^P_A)$ such that $\phi_n = \hat{\phi}_n \circ \Upsilon^{\cP}_\infty$. As the lifting map $\Upsilon^{\cP}_\infty$ is surjective onto $\mathcal{C}^{\cP}_A$, we find
\begin{equation}
    \|\phi_n - \phi_m\|_\infty = \sup_{x \in \overline{H}} |\hat{\phi}_n(\Upsilon^{\cP}_\infty(x)) - \hat{\phi}_m(\Upsilon^{\cP}_\infty(x))| = \sup_{y \in \mathcal{C}^{\cP}_A} |\hat{\phi}_n(y) - \hat{\phi}_m(y)| = \|\hat{\phi}_n - \hat{\phi}_m\|_{B_b}.
\end{equation}
This, in turn, implies $(\hat{\phi}_n)_{n \in \mathbb{N}}$ is Cauchy in $B_b(\mathcal{C}^{\cP}_A)$, which is complete. Thus, we have that the limit $\hat{\phi} \in B_b(\mathcal{C}^{\cP}_A)$ exists and that $\hat{\phi}_n \to \hat{\phi}$.
Setting $\phi \coloneqq \hat{\phi} \circ \Upsilon^{\cP}_\infty$, which inherently belongs to $\mathcal{S}^{\cP}_\infty(\overline{H})$, we finally arrive at
\begin{equation}
    \|\phi_n - \phi\|_\infty = \sup_{y \in \mathcal{C}^{\cP}_A} |\hat{\phi}_n(y) - \hat{\phi}(y)| \to 0,
\end{equation}
as $n \to \infty$.
\end{proof}

\subsection{Partial Smoothing}

It is well known, see, e.g., \cite[Theorem 9.26]{DPZa14}, that, if the condition $\operatorname{Im}e^{tA}\subseteq \operatorname{Im} Q_{t}^{1/2}$ is satisfied, the transition semigroup $P_{t}$ maps bounded and measurable functions into differentiable ones. However, such a condition turns out to be very restrictive and, in particular, is not verifiable in our examples in Section \ref{sec:applications}. For this reason, a weaker condition was introduced in \cite{GoMa17}. More precisely, it is shown that, if the condition $\operatorname{Im}(\cP e^{tA}B) \subseteq \operatorname{Im} \left(\cP Q_{t} \cP^*\right)^{1/2}$ is satisfied, the transition semigroup regularizes functions that depend on the state only through the action of the projection $\cP$ in the directions given by an operator $B$. However, as pointed out in \cite{GoMa25, BoGo25} this condition is not enough to deal with control problems in which the running cost depends on the state variable. This limitation motivates the introduction of the following, more dynamic assumption.

\begin{hypothesis}\label{ipo-smoothing-lifting}
We additionally assume that
\begin{enumerate}[(i)]
    \item for any $t> 0$ and $k \in K$, we have $\Upsilon^{\cP}_\infty\overline{e^{tA}}Bk \in \operatorname{Im} \left(\Upsilon^{\cP}_\infty Q_{t} (\Upsilon^{\cP}_\infty)^*\right)^{1/2}$ such that the lifted operator $\widehat\Lambda^{\cP,B}(t)k \coloneqq \big(\Upsilon^{\cP}_\infty Q_{t} (\Upsilon^{\cP}_\infty)^*\big)^{-1/2} \Upsilon^{\cP}_\infty e^{tA}Bk$ is well-defined via the Closed Graph Theorem,
    \item there exist constants $\kappa_0>0$ and $\gamma \in (0,1)$ such that the singularity at $0$ is integrable, i.e., we assume that  $\|\widehat\Lambda^{\cP,B}(t)\|_{\mathcal{L}(K,L^2_\rho)} \le \kappa_0 (t^{-\gamma}\vee 1)$ for all $t > 0$.
\end{enumerate}
\end{hypothesis}

Provided this structural condition is met, we can show the following, Bismut--Elworthy--Li-type formula for lifted partial smoothing, which explicitly provides directional derivatives and the proof of which can be found in \cite[Proposition 2.6]{GoMa25}

\begin{proposition}\label{prop-lift-partial-smoothing}
Under Hypothesis \ref{ipo-smoothing-lifting}(i), the semigroup $P_t$ maps any function $\phi\in \mathcal{S}^P_\infty(\overline{H})$ to a $B$-Fréchet differentiable function. Furthermore, for $t>0$ and $x\in \overline{H}$, we have
\begin{gather}\label{eq:formulader-gen-Pnew}
\nabla^B P_{t}[\phi](x)k = \int_{L^2_\rho}\hat\phi\big(z_1+ \Upsilon^{\cP}_\infty x \big) \langle\widehat\Lambda^{\cP,B}(t) k, (\Upsilon^{\cP}_\infty Q_t(\Upsilon^{\cP}_\infty)^*)^{-1/2} z_1\rangle_{L^2_\rho} \,\mathcal{N}(0,\Upsilon^{\cP}_\infty Q_t(\Upsilon^{\cP}_\infty)^*)(\!\dd z_1)\\
= \mathbb{E} \Big[\hat\phi \big(\Upsilon^{\cP}_\infty X(t; x)\big) \langle\widehat\Lambda^{\cP,B}(t) k, (\Upsilon^{\cP}_\infty Q_t(\Upsilon^{\cP}_\infty)^*)^{-1/2} \Upsilon^{\cP}_\infty W_A(t)\rangle_{L^2_\rho} \Big].
\end{gather}
Moreover, we have
\begin{equation}
    \Big| \langle\nabla^B P_t[\phi](x), k\rangle\Big| \leq \|\widehat\Lambda^{P,B}(t)\|_{\mathcal{L}(K,L^2_\rho)} \Vert\phi\Vert_\infty |k|.
\end{equation}
\end{proposition}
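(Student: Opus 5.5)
The plan is to reduce the statement to the classical Bismut--Elworthy--Li argument for a Gaussian measure on the Hilbert space $L^2_\rho$, using the lifting to move the computation off $\Hbar$.

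\textbf{Step 1: rewrite $P_t[\phi]$ as a Gaussian integral on $L^2_\rho$.} Write $\phi=\hat\phi\circ\Upsilon^{\cP}_\infty$ as in Remark~\ref{rem:first_props}. Then
\begin{equation}
P_t[\phi](x)=\bE\big[\hat\phi\big(\Upsilon^{\cP}_\infty e^{tA}x+\Upsilon^{\cP}_\infty W_A(t)\big)\big].
\end{equation}
The random variable $\Upsilon^{\cP}_\infty W_A(t)$ is centred Gaussian in $L^2_\rho$, being a bounded linear image of $W_A(t)$, whose covariance is $Q_t$. Its covariance is therefore $R_t\coloneqq\Upsilon^{\cP}_\infty Q_t(\Upsilon^{\cP}_\infty)^*$, with the adjoint from Proposition~\ref{prop-adjoint}, and $R_t$ is trace class. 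Hence
\begin{equation}
P_t[\phi](x)=\int_{L^2_\rho}\hat\phi\big(z+\Upsilon^{\cP}_\infty e^{tA}x\big)\,\cN(0,R_t)(\dd z).
\end{equation}
We extend $\hat\phi$ measurably and boundedly to all of $L^2_\rho$, for instance by zero off the image of the lifting. The integral is unaffected, since by \cite[Lemma 2.8]{GoMa25} $\Upsilon^{\cP}_\infty(\overline{e^{tA}}x+W_A(t))$ lies in $\cC^{\cP}_A$ almost surely. Also $\Upsilon^{\cP}_\infty e^{tA}x$ is the lifting of $X(t;x)$ without noise, which matches the second form of the formula.

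\textbf{Step 2: differentiate via Cameron--Martin.} Perturbing $x$ to $x+sBk$ shifts the mean by $s\,h$, where $h\coloneqq\Upsilon^{\cP}_\infty\overline{e^{tA}}Bk$. By Hypothesis~\ref{ipo-smoothing-lifting}(i), $h\in R_t^{1/2}(L^2_\rho)$ with $h=R_t^{1/2}\widehat\Lambda^{\cP,B}(t)k$. The Cameron--Martin theorem then gives $\cN(sh,R_t)\ll\cN(0,R_t)$ with density
\begin{equation}
\exp\Big(s\langle \widehat\Lambda k, R_t^{-1/2}z\rangle-\tfrac{s^2}{2}|\widehat\Lambda k|^2\Big),
\end{equation}
where $R_t^{-1/2}z$ is understood as the white-noise map, an $L^2(\cN(0,R_t))$-limit.

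\textbf{Step 3: pass to the limit.} Since the density is smooth in $s$ and dominated in $L^2(\cN(0,R_t))$ uniformly for $|s|\le1$, and $\hat\phi$ is bounded, dominated convergence gives the derivative at $s=0$, which is exactly the claimed formula. The Cauchy--Schwarz inequality together with
\begin{equation}
\int\langle v,R_t^{-1/2}z\rangle^2\,\cN(0,R_t)(\dd z)=|v|^2_{L^2_\rho}
\end{equation}
yields the bound $\|\widehat\Lambda(t)\|\,\|\phi\|_\infty|k|$, since $\|\hat\phi\|_\infty=\|\phi\|_\infty$ by surjectivity of the lifting.

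\textbf{Step 4: Fréchet differentiability, the main obstacle.} The Gâteaux derivative is linear and bounded in $k$, because the closed graph theorem gives $\widehat\Lambda(t)\in\cL(K,L^2_\rho)$. For Fréchet differentiability, uniformly over $|k|\le1$, I would estimate the remainder as
\begin{equation}
\|\hat\phi\|_\infty\,\big\|e^{\langle v,\xi\rangle-|v|^2/2}-1-\langle v,\xi\rangle\big\|_{L^1}=O(|v|^2),
\qquad v=s\widehat\Lambda k .
\end{equation}
This is uniform because $|v|\le|s|\,\|\widehat\Lambda\|$, and it gives $B$-Fréchet differentiability. Continuity of the derivative in $x$ follows from the same shift argument applied to the shifted density.
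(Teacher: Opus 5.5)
Your proposal is correct and is essentially the Cameron--Martin argument behind \cite[Proposition 2.6]{GoMa25}, to which the paper defers for the proof: push $\cN(0,Q_t)$ forward to $\cN\big(0,\Upsilon^{\cP}_\infty Q_t(\Upsilon^{\cP}_\infty)^*\big)$ on $L^2_\rho$, shift the mean along $\Upsilon^{\cP}_\infty\overline{e^{tA}}Bk$, and use your explicit $O\big(s^2\|\widehat\Lambda^{\cP,B}(t)\|^2\big)$ remainder bound for uniformity over $|k|\le 1$ (your mean $\Upsilon^{\cP}_\infty\overline{e^{tA}}x$ is the right one; the $\Upsilon^{\cP}_\infty x$ in the first line of the displayed formula is a misprint, as the second line and the proof of Lemma~\ref{lemma:Tlambda_in_Sp} confirm). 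Drop the last sentence of Step 4, which the statement does not need: for merely bounded measurable $\hat\phi$ the map $x\mapsto\nabla^B P_t[\phi](x)$ need not be continuous, because increments of $x$ outside the $B$-directions are not Cameron--Martin shifts (e.g.\ $A=0$, $\cP=I$ on $\bR^2$ with noise and control acting only on the first coordinate and $\phi(x)=\sin(x_1)\mathbf{1}_{\{x_2>0\}}$, whose $B$-derivative after time $t$ is $e^{-t/2}\cos(x_1)\mathbf{1}_{\{x_2>0\}}$), which is precisely why Proposition~\ref{prop:Tlambda_B_diff_UCb} requires $\psi$ to be uniformly continuous.
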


\subsection{Resolvent Operator for Liftable Functions}
In preparation for our main results in Section \ref{sec:pseudo_resolvent_extention}, we proceed to show some properties of the resolvent of the uncontrolled OU semigroup acting on liftable functions.
Throughout, for $\lambda >0$, we will use the following notation for the resolvent 
\begin{equation}\label{def:resolvent}
    T_\lambda \psi(x) \coloneqq \int_0^\infty e^{-\lambda t} P_t[\psi](x) \dd t.
\end{equation}

The first result of this section shows that the resolvent $T_\lambda$ is actually well defined on $S^{\cP}_\infty$.
\begin{lemma}\label{lemma:Tlambda_in_Sp}
For every $\lambda > 0$, $\psi \in \mathcal{S}^{\cP}_\infty(\overline{H})$, and $x \in \Hbar$, the resolvent operator
\begin{equation}
T_\lambda \psi(x) = \int_0^\infty e^{-\lambda t} P_t[\psi](x) \dd t
\end{equation}
maps $\mathcal{S}^P_\infty(\overline{H})$ into itself, i.e., $T_\lambda \psi \in \mathcal{S}^P_\infty(\overline{H})$.
\end{lemma}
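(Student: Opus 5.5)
The plan is to show that for each fixed $t\ge 0$ the function $P_t[\psi]$ is itself liftable, with an explicit lifted representative. Then integrate in $t$ at the level of the lifted representatives.

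\textbf{Step 1: $P_t$ preserves liftability.} Let $\psi=\hat\psi\circ\Upsilon^{\cP}_\infty$ with $\hat\psi\in B_b(\cC^{\cP}_A)$. Linearity of the lifting and the formula $Z(t;x)=\overline{e^{tA}}x+W_A(t)$ give
\begin{equation}
\Upsilon^{\cP}_\infty Z(t;x)=\Upsilon^{\cP}_\infty x+\Upsilon^{\cP}_\infty W_A(t).
\end{equation}
Here $\Upsilon^{\cP}_\infty W_A(t)$ is a random element of $\cC^{\cP}_A$, since $W_A(t)\in H\subseteq\Hbar$. Because $\cC^{\cP}_A$ is a linear space, $y+\Upsilon^{\cP}_\infty W_A(t)\in\cC^{\cP}_A$ for every $y\in\cC^{\cP}_A$. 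We therefore define
\begin{equation}
\hat\psi_t(y)\coloneqq\bE\big[\hat\psi\big(y+\Upsilon^{\cP}_\infty W_A(t)\big)\big],\qquad y\in\cC^{\cP}_A.
\end{equation}
This is exactly the expression in the first line of Proposition \ref{prop-lift-partial-smoothing}, with Gaussian measure $\cN(0,\Upsilon^{\cP}_\infty Q_t(\Upsilon^{\cP}_\infty)^*)$. By construction $P_t[\psi](x)=\hat\psi_t(\Upsilon^{\cP}_\infty x)$, and $\|\hat\psi_t\|_\infty\le\|\hat\psi\|_\infty$.

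\textbf{Step 2: define the lifted resolvent.} Set
\begin{equation}
\widehat{T_\lambda\psi}(y)\coloneqq\int_0^\infty e^{-\lambda t}\hat\psi_t(y)\dd t,
\end{equation}
which is bounded by $\lambda^{-1}\|\hat\psi\|_\infty$. It satisfies $T_\lambda\psi(x)=\widehat{T_\lambda\psi}(\Upsilon^{\cP}_\infty x)$ pointwise for every $x\in\Hbar$.

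\textbf{Main obstacle: measurability.} Definition \ref{def:SP} requires the lifted function to lie in $B_b(\cC^{\cP}_A)$, so both integrals above must be Borel measurable. This needs joint measurability of $(t,y,\omega)\mapsto\hat\psi(y+\Upsilon^{\cP}_\infty W_A(t))$.

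1. The map $(y,z)\mapsto y+z$ is continuous on $\cC^{\cP}_A$ in the topology of uniform convergence on compacts, or equally in the $L^2_\rho$ topology.
2. The map $(t,\omega)\mapsto\Upsilon^{\cP}_\infty W_A(t)$ is measurable. This follows from the continuity of $\Upsilon^{\cP}_\infty$ (its remark and \cite[Lemma 2.13]{GoMa25}) together with the mean-square continuity, hence progressive measurability, of $W_A$.
3. Composing with the Borel function $\hat\psi$ gives a jointly Borel, bounded integrand. Fubini--Tonelli then yields Borel measurability of $y\mapsto\hat\psi_t(y)$ and of $y\mapsto\widehat{T_\lambda\psi}(y)$.

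The same argument also justifies that $t\mapsto P_t[\psi](x)$ is measurable, so $T_\lambda\psi$ is well defined.

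One subtlety remains if the Borel structure on $\cC^{\cP}_A$ is meant in a different topology. I would fix the topology induced by $L^2_\rho$ (or compact-uniform convergence) throughout and check that the translation map is continuous there. This is immediate, since both are vector-space topologies.

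With measurability in hand, $\widehat{T_\lambda\psi}\in B_b(\cC^{\cP}_A)$, and hence $T_\lambda\psi\in\mathcal{S}^{\cP}_\infty(\Hbar)$.
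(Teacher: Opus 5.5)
\textbf{There is a genuine gap in Step 1.} The identity
\[
\Upsilon^{\cP}_\infty Z(t;x)=\Upsilon^{\cP}_\infty x+\Upsilon^{\cP}_\infty W_A(t)
\]
is false. Linearity applied to $Z(t;x)=\overline{e^{tA}}x+W_A(t)$ gives $\Upsilon^{\cP}_\infty(\overline{e^{tA}}x)+\Upsilon^{\cP}_\infty W_A(t)$, so you have dropped the semigroup. As a result, your representative computes $\hat\psi_t(\Upsilon^{\cP}_\infty x)=\bE[\psi(x+W_A(t))]$, whereas the target is $P_t[\psi](x)=\bE[\psi(\overline{e^{tA}}x+W_A(t))]$.

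A concrete counterexample: take $H=\Hbar=\bR$, $A=-1$, $\cP=I$, $G=0$. Then $P_t[\psi](x)=\psi(e^{-t}x)$, while your $\hat\psi_t(\Upsilon^{\cP}_\infty x)=\psi(x)$. So your $\widehat{T_\lambda\psi}$ does not represent $T_\lambda\psi$.

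Citing the first line of Proposition \ref{prop-lift-partial-smoothing} does not help. That formula has to be read with the evolved state, as its second line (with $X(t;x)$) and the definition \eqref{eq:ou_semigroup} of $P_t$ show. The term you dropped is exactly the content of the lemma. You must show that $\overline{e^{tA}}x$ affects $\psi(\overline{e^{tA}}x+W_A(t))$ only through $\Upsilon^{\cP}_\infty x$, and nothing in your argument does this.

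\textbf{The missing idea is the one the paper uses:} the free evolution acts on lifted paths as a left shift. From $\overline{\cP e^{sA}}\,\overline{e^{tA}}=\overline{\cP e^{(s+t)A}}$ you get
\[
\Upsilon^{\cP}_\infty(\overline{e^{tA}}x)=S_t\Upsilon^{\cP}_\infty x,\qquad (S_tf)(s)=f(t+s).
\]
This shift has the properties you need:
\begin{enumerate}[(i)]
\item it maps $\cC^{\cP}_A$ into itself;
\item it is bounded on $L^2_\rho$, with $\|S_tf\|^2_{L^2_\rho}\le e^{\rho t}\|f\|^2_{L^2_\rho}$;
\item it gives the correct representative $\hat\psi_t(y)=\bE[\hat\psi(S_ty+\Upsilon^{\cP}_\infty W_A(t))]$.
\end{enumerate}

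With this correction the rest of your plan goes through. Your measurability argument needs one more (true) fact: $(t,y)\mapsto S_ty$ is jointly continuous, which follows from strong continuity and local boundedness of the shift semigroup on $L^2_\rho$.

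Your choice to integrate over $\omega$ with the fixed measure $\bP$ is a small improvement on the paper. The paper integrates against the $t$-dependent Gaussian measures $\cN(0,\Upsilon^{\cP}_\infty Q_t(\Upsilon^{\cP}_\infty)^*)$, which makes joint measurability for Fubini--Tonelli less direct.
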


\begin{proof}
Since $\psi \in \mathcal{S}^{\cP}_\infty(\overline{H})$, by definition, there exists a bounded and Borel-measurable function $\hat{\psi}\colon L^2_\rho \to \mathbb{R}$ such that $\psi(x) = \hat{\psi}(\Upsilon^{\cP}_\infty (x))$ for every $x \in \overline{H}$. Our goal is to explicitly construct a lifted function $\widehat{T_\lambda \psi} \colon L^2_\rho \to \mathbb{R}$ that is bounded, Borel-measurable, and satisfies $T_\lambda \psi(x) = \widehat{T_\lambda \psi}(\Upsilon^{\cP}_\infty (x))$.

First, let $S_t \colon L^2_\rho \to L^2_\rho$ be the left-shift operator, meaning, for each $t>0$, we have
\begin{equation}
(S_t f)(s) \coloneqq f(t+s), \quad \text{for } s \in (0, \infty).
\end{equation}
We now show that $S_t$ is a bounded linear operator on $L^2_\rho$. Computing its norm, we obtain
\begin{align}
\|S_t f\|_{L^2_\rho}^2 &= \int_0^\infty e^{-\rho s} |f(t+s)|_H^2 \dd s
= \int_t^\infty e^{-\rho (r-t)} |f(r)|_H^2 \dd r \\
&= e^{\rho t} \int_t^\infty e^{-\rho r} |f(r)|_H^2 \dd r
\le e^{\rho t} \|f\|_{L^2_\rho}^2.
\end{align}
Thus $\|S_t\|_{\mathcal{L}(L^2_\rho;\, L^2_\rho)} \le e^{\rho t}$, which ensures that the shift is a well-defined and continuous, hence Borel-measurable, operator for every $t > 0$.

Next, we evaluate the action of the lifting map $\Upsilon^{\cP}_\infty$ on the evolved state $\overline{e^{tA}}x$. By definition of the lifting and the strong semigroup property, for every $s > 0$, we have
\begin{equation}
    \big[ \Upsilon^{\cP}_\infty(\overline{e^{tA}}x) \big] (s) = \overline{\cP e^{sA}} (\overline{e^{tA}}x) = \overline{\cP e^{(s+t)A}}x = \big[\Upsilon^{\cP}_\infty x\big](t+s) = \big[S_t (\Upsilon^{\cP}_\infty x)\big](s)
\end{equation}
in the $L^2_\rho$ sense.

Recall that the action of the OU semigroup $P_t$ on $\psi$ is given by $P_t[\psi](x) = \mathbb{E}[\psi(\overline{e^{tA}}x + W_A(t))]$. 
Thus, the lifted random variable $\Upsilon^{\cP}_\infty W_A(t)$ induces the Gaussian measure $\mathcal{N}(0, \Sigma_t)$ on $L^2_\rho$, where the covariance operator is given by $\Sigma_t = \Upsilon^{\cP}_\infty Q_t(\Upsilon^{\cP}_\infty)^*$. 
Rewriting in terms of the lifted function $\hat{\psi}$ and the shift operator $S_t$, we find
\begin{align*}
    P_t[\psi](x) &= \mathbb{E}[\hat{\psi}(\Upsilon^P_\infty(\overline{e^{tA}}x) + \Upsilon^P_\infty W_A(t))]= \int_{L^2_\rho} \hat{\psi}(S_t (\Upsilon^P_\infty x) + z) \, \mathcal{N}(0,\Sigma_t)(\!\dd z).
\end{align*}

Motivated by this representation, for every trajectory $\eta \in L^2_\rho(0, \infty;\, H)$, we define the lifted function $\widehat{T_\lambda \psi}\colon L^2_\rho(0,\infty;\, H) \to \mathbb{R}$ as
\begin{equation}
    \widehat{T_\lambda \psi}(\eta) \coloneqq \int_0^\infty e^{-\lambda t} \bigg( \int_{L^2_\rho} \hat{\psi}(S_t \eta + z) \, \mathcal{N}(0,\Sigma_t)(dz) \bigg) \dd t.
\end{equation}
Evaluating at $\eta = \Upsilon^P_\infty (x)$, we exactly obtain $T_\lambda \psi(x) = \widehat{T_\lambda \psi}(\Upsilon^P_\infty (x))$. 
Since $\hat{\psi}$ is bounded and $\mathcal{N}(0,\Sigma_t)$ is a probability measure for every $t$, the inner integral is bounded by $\|\hat{\psi}\|_\infty$. Integrating over $t$, we obtain
\begin{equation}
    \big|\widehat{T_\lambda \psi}(\eta) \big| \le \|\hat{\psi}\|_\infty \int_0^\infty e^{-\lambda t} \dd t = \frac{1}{\lambda}\|\hat{\psi}\|_\infty < \infty.
\end{equation}
Note that the map $\eta \mapsto S_t \eta + z$ is continuous in $L^2_\rho$. Since $\hat{\psi}$ is Borel-measurable, the composition $\eta \mapsto \hat{\psi}(S_t \eta + z)$ is measurable. By the Fubini-Tonelli Theorem, integrating with respect to the measure $\mathcal{N}(0,\Sigma_t)$ and the Lebesgue measure preserves Borel-measurability. Since $\widehat{T_\lambda \psi}$ is well-defined, bounded, and measurable, we conclude that indeed $T_\lambda \psi \in \mathcal{S}^{\cP}_\infty(\overline{H})$.
\end{proof}

Next, we show that $T_\lambda$ maps $\UCb(\overline{H})$ functions to $\UCb(\overline{H})$ functions.
\begin{lemma}\label{lemma:Tlambda_in_UCb}
For every $\lambda > 0$ and every $\psi \in \UCb(\overline{H})$, the resolvent operator $T_\lambda \psi$ belongs to $\UCb(\overline{H})$.
\end{lemma}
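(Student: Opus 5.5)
The approach is to combine a sup-norm bound with a modulus-of-continuity estimate for each $P_t[\psi]$ that is uniform in $t$ on compact time intervals. The tail in $t$ is then handled by the exponential discount $e^{-\lambda t}$.

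\textbf{Boundedness and measurability.} Boundedness is immediate: since $\cN(0,Q_t)$ is a probability measure, $|P_t[\psi](x)|\le \|\psi\|_\infty$ for all $t$ and $x$. Hence $|T_\lambda\psi(x)|\le \lambda^{-1}\|\psi\|_\infty$. Measurability of $t\mapsto P_t[\psi](x)$ follows from the representation
\begin{equation}
P_t[\psi](x)=\bE\big[\psi\big(\overline{e^{tA}}x+W_A(t)\big)\big].
\end{equation}
Indeed, $\psi$ is continuous, $t\mapsto \overline{e^{tA}}x$ is continuous in $\Hbar$, and $W_A$ is mean-square continuous in $H\hookrightarrow\Hbar$. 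By dominated convergence, $t\mapsto P_t[\psi](x)$ is continuous, so the Bochner/Lebesgue integral defining $T_\lambda\psi(x)$ makes sense.

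\textbf{Uniform continuity.} Let $\omega_\psi$ denote the modulus of continuity of $\psi$ on $\Hbar$, i.e.\ $|\psi(a)-\psi(b)|\le\omega_\psi(\|a-b\|_{\Hbar})$ with $\omega_\psi(r)\to0$ as $r\to0$. Using the same noise realisation for $x$ and $z$, the noise cancels and
\begin{equation}
|P_t[\psi](x)-P_t[\psi](z)|\le \bE\big|\psi(\overline{e^{tA}}x+W_A(t))-\psi(\overline{e^{tA}}z+W_A(t))\big|\le \omega_\psi\big(Me^{\omega t}\|x-z\|_{\Hbar}\big).
\end{equation}
Here we use the type bound $\|\overline{e^{tA}}\|_{\cL(\Hbar)}\le Me^{\omega t}$. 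We may also bound the same difference by $2\|\psi\|_\infty$.

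Fix $\varepsilon>0$ and split the integral at a horizon $R$:
\begin{equation}
|T_\lambda\psi(x)-T_\lambda\psi(z)|\le \int_0^R e^{-\lambda t}\,\omega_\psi\big(Me^{|\omega| R}\|x-z\|_{\Hbar}\big)\dd t+2\|\psi\|_\infty\int_R^\infty e^{-\lambda t}\dd t .
\end{equation}
First choose $R$ so that the tail term $2\|\psi\|_\infty\lambda^{-1}e^{-\lambda R}$ is less than $\varepsilon/2$. Then choose $\delta>0$ such that $\omega_\psi(Me^{|\omega|R}\delta)<\lambda\varepsilon/2$. With these choices, $\|x-z\|_{\Hbar}<\delta$ implies $|T_\lambda\psi(x)-T_\lambda\psi(z)|<\varepsilon$, which gives uniform continuity.

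\textbf{Main obstacle.} The only real issue is that the contraction factor $Me^{\omega t}$ may grow when $\omega>0$, so the moduli of $P_t[\psi]$ are not uniform in $t$. Truncating at $R$ and using the discount $e^{-\lambda t}$ together with the uniform bound $2\|\psi\|_\infty$ on the tail resolves this. No smoothing hypothesis, and in particular nothing from Hypothesis~\ref{ipo-smoothing-lifting}, is needed for this lemma.
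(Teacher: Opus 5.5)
Your proof is correct and follows the paper's argument. Both use the same synchronous-noise coupling to get $|P_t[\psi](x)-P_t[\psi](z)|\le\omega_\psi(Me^{\omega t}\|x-z\|_{\Hbar})$, and your truncation at a horizon $R$ is an explicit $\varepsilon$--$\delta$ version of the paper's dominated-convergence step on $\int_0^\infty e^{-\lambda t}\rho_\psi(Me^{\omega t}\delta)\dd t$. Both rely on the modulus, defined as a supremum, being nondecreasing; your additional check that $t\mapsto P_t[\psi](x)$ is continuous, so the resolvent integral is well defined, is a point the paper leaves implicit.
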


\begin{proof}
Let $\psi \in \UCb(\overline{H})$. Since $\psi$ is uniformly continuous, it admits a modulus of continuity $\rho_\psi \colon [0, \infty) \to [0, \infty)$ defined by
\begin{equation}
    \rho_\psi(\delta) \coloneqq \sup_{|x-y|_{\overline{H}} \le \delta} |\psi(x) - \psi(y)|.
\end{equation}
By definition, $\rho_\psi$ is a non-decreasing function and $\lim_{\delta \downarrow 0} \rho_\psi(\delta) = 0$. Since $\psi$ is bounded, we have $\rho_\psi(\delta) \le 2\|\psi\|_\infty$ for all $\delta \ge 0$.

Boundedness of $T_\lambda \psi$ follows from $|P_t[\psi](x)| \le \|\psi\|_\infty$ for all $t \ge 0$ and $x \in \overline{H}$, as
\begin{equation}
    \|T_\lambda \psi\|_\infty \le \int_0^\infty e^{-\lambda t} \| \psi \|_\infty \dd t = \frac{1}{\lambda} \|\psi\|_\infty < \infty.
\end{equation}

To prove uniform continuity, we explicit construct the modulus of continuity for $T_\lambda \psi$: For any $x, y \in \overline{H}$ such that $|x - y|_{\overline{H}} \le \delta$, consider
\begin{equation}
    |T_\lambda \psi(x) - T_\lambda \psi(y)| \le \int_0^\infty e^{-\lambda t}  \mathbb{E}\Big[ \Big| \psi\big(\overline{e^{tA}}x + W_A(t)\big) - \psi\big(\overline{e^{tA}}y + W_A(t)\big)\Big| \Big]  \dd t.
\end{equation}
Using the modulus of continuity $\rho_\psi$ and that it is non-decreasing as well as the bound on the extended semigroup $\overline{e^{tA}}$, we estimate
\begin{align}
    \Big|\psi(\overline{e^{tA}}x + W_A(t)) - \psi(\overline{e^{tA}}y + W_A(t))\Big| &\le \rho_\psi\big(\big|\overline{e^{tA}}x - \overline{e^{tA}}y\big|_{\overline{H}}\big)\\
    &\le \rho_\psi\big( \|\overline{e^{tA}}\|_{\mathcal{L}(\overline{H})} |x - y|_{\overline{H}} \big)\\
    &\le \rho_\psi( M e^{\omega t} \delta ).
\end{align}
Taking the expectation and substituting back into the integral, we find that the difference is bounded independently of the specific choice of $x$ and $y$ as
\begin{equation}
|T_\lambda \psi(x) - T_\lambda \psi(y)| \le \int_0^\infty e^{-\lambda t} \rho_\psi( M e^{\omega t} \delta ) \dd t.
\end{equation}
Taking the supremum over all $x,y \in \overline{H}$ with $|x - y|_{\overline{H}} \le \delta$, we define the candidate modulus of continuity $\tilde{\rho}(\delta)$ for $T_\lambda \psi$ as
\begin{equation}\label{eq:ucb_modulus}
\tilde{\rho}(\delta) \coloneqq \sup_{|x-y|_{\overline{H}} \le \delta} |T_\lambda \psi(x) - T_\lambda \psi(y)| \le \int_0^\infty e^{-\lambda t} \rho_\psi( M e^{\omega t} \delta ) \dd t.
\end{equation}

To establish uniform continuity, we must show that $\lim_{\delta \downarrow 0} \tilde{\rho}(\delta) = 0$. To that end, we apply Lebesgue's Dominated Convergence Theorem (DCT) to the integral sequence parameterized by $\delta$. For any fixed $t > 0$, as $\delta \downarrow 0$, the argument $M e^{\omega t} \delta \to 0$. By the uniform continuity of $\psi$, $\rho_\psi( M e^{\omega t} \delta ) \to 0$. Furthermore, for all $\delta \ge 0$ and $t > 0$, the integrand is strictly bounded by
\begin{equation}
    e^{-\lambda t} \rho_\psi( M e^{\omega t} \delta ) \le e^{-\lambda t} 2\|\psi\|_\infty.
\end{equation}
By the DCT, we can exchange the limit with the integral, yielding that $\lim_{\delta \downarrow 0} \tilde{\rho}(\delta) = \int_0^\infty 0 \dd t = 0$, showing that $T_\lambda \psi \in \UCb(\overline{H})$.
\end{proof}

\begin{definition}
    We say that the \emph{partial smoothing hypothesis} holds if there exist constants $\kappa_0 > 0$ and $\gamma \in (0, 1)$ such that, for every $\phi \in \mathcal{S}^{\cP}_\infty(\overline{H})$ and $t>0$, the function $P_t[\phi]$ is $B$-differentiable, and, for all $t > 0$ and $x \in \overline{H}$, we have
    \begin{equation}\label{eq:smoothing_bound_recalled}
        \|\nabla^B P_t[\phi](x)\|_{\mathcal{L}(K, \mathbb{R})} \le \kappa_0 (1 \vee t^{-\gamma}) \|\phi\|_\infty.
    \end{equation}
\end{definition}
Note that due to Proposition \ref{prop-lift-partial-smoothing}, Hypothesis \ref{ipo-smoothing-lifting} is a sufficient condition for the partial smoothing hypothesis to hold.

\begin{lemma}\label{lemma:Tlambda_B_diff}
Let the partial smoothing hypothesis hold.
Then, for every $\lambda > 0$ and $\psi \in \mathcal{S}^{\cP}_\infty(\overline{H})$, the resolvent operator $T_\lambda \psi$ is $B$-G\^ateaux differentiable on $\overline{H}$, and its derivative is given by 
\begin{equation}\label{eq:Tlambda_derivative_formula}
    \nabla^B T_\lambda \psi(x) = \int_0^\infty e^{-\lambda t} \nabla^B P_t[\psi](x) \dd t.
\end{equation}
Furthermore, the derivative is bounded, i.e., we have $\|\nabla^B T_\lambda \psi\|_\infty \le C_{\lambda, \gamma} \|\psi\|_\infty$ with the constant $C_{\lambda, \gamma} \coloneqq \int_0^\infty e^{-\lambda t} \kappa_0 (1 \vee t^{-\gamma}) \dd t < \infty$.
\end{lemma}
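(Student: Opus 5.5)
We prove Lemma \ref{lemma:Tlambda_B_diff} by differentiating under the integral sign in the definition \eqref{def:resolvent} of $T_\lambda\psi$, using the partial smoothing bound \eqref{eq:smoothing_bound_recalled} as an integrable dominating function.

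\textbf{Step 1: the constant is finite.} Since $\gamma\in(0,1)$, we have
\begin{equation}
C_{\lambda,\gamma}=\kappa_0\Big(\int_0^1 e^{-\lambda t}t^{-\gamma}\dd t+\int_1^\infty e^{-\lambda t}\dd t\Big)\le \kappa_0\Big(\tfrac{1}{1-\gamma}+\tfrac{1}{\lambda}\Big)<\infty.
\end{equation}

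\textbf{Step 2: difference quotients.} Fix $x\in\Hbar$, $k\in K$ and $s\neq 0$. Then
\begin{equation}
\frac{T_\lambda\psi(x+sBk)-T_\lambda\psi(x)}{s}=\int_0^\infty e^{-\lambda t}\,\frac{P_t[\psi](x+sBk)-P_t[\psi](x)}{s}\dd t .
\end{equation}
By the partial smoothing hypothesis, $P_t[\psi]$ is $B$-differentiable for every $t>0$. Hence the inner quotient converges to $\nabla^B P_t[\psi](x)k$ pointwise in $t>0$ as $s\to0$.

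\textbf{Step 3: a uniform dominating bound (the main obstacle).} To apply dominated convergence we need a bound on the quotient uniform in $s$. Only $B$-directional differentiability of $P_t[\psi]$ is available, not any convexity structure. The map $g(r)\coloneqq P_t[\psi](x+rBk)$ is differentiable on $\bR$ by the hypothesis applied at each point $x+rBk\in\Hbar$. Its derivative satisfies $|g'(r)|\le \kappa_0(1\vee t^{-\gamma})\|\psi\|_\infty|k|$, because \eqref{eq:smoothing_bound_recalled} is uniform in the base point. The scalar mean value theorem then gives
\begin{equation}
\Big|\frac{g(s)-g(0)}{s}\Big|\le \kappa_0(1\vee t^{-\gamma})\|\psi\|_\infty|k| .
\end{equation}
Multiplied by $e^{-\lambda t}$, this bound is integrable by Step 1. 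One must also check that $t\mapsto \nabla^B P_t[\psi](x)k$ is measurable. This follows because it is a pointwise limit of measurable difference quotients. Joint measurability of $(t,x)\mapsto P_t[\psi](x)$ comes from the Gaussian integral representation and Fubini, as in Lemma \ref{lemma:Tlambda_in_Sp}. Dominated convergence now yields the directional derivative
\begin{equation}
\nabla^B T_\lambda\psi(x;k)=\int_0^\infty e^{-\lambda t}\nabla^B P_t[\psi](x)k\dd t .
\end{equation}

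\textbf{Step 4: Gâteaux differentiability and the bound.} The map $k\mapsto\int_0^\infty e^{-\lambda t}\nabla^B P_t[\psi](x)k\dd t$ is linear, since each integrand is linear in $k$. It is bounded, with
\begin{equation}
\Big|\int_0^\infty e^{-\lambda t}\nabla^B P_t[\psi](x)k\dd t\Big|\le C_{\lambda,\gamma}\|\psi\|_\infty|k| .
\end{equation}
So it belongs to $\cL(K,\bR)$, which proves $B$-Gâteaux differentiability, formula \eqref{eq:Tlambda_derivative_formula}, and the estimate $\|\nabla^B T_\lambda\psi\|_\infty\le C_{\lambda,\gamma}\|\psi\|_\infty$ after taking the supremum over $x$ and over $|k|\le1$.
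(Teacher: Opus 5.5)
Your proposal is correct and follows essentially the same route as the paper. Both arguments use the mean value theorem together with the base-point-uniform partial smoothing bound to get the dominating function $e^{-\lambda t}\kappa_0(1\vee t^{-\gamma})\|\psi\|_\infty\|k\|_K$, interchange limit and integral by dominated convergence, and then read off linearity and the bound $C_{\lambda,\gamma}\|\psi\|_\infty$. Your explicit check that $C_{\lambda,\gamma}<\infty$ and your measurability remark for $t\mapsto\nabla^B P_t[\psi](x)k$ are small additions that the paper leaves implicit.
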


\begin{proof}
Let $\psi \in \mathcal{S}^{\cP}_\infty(\overline{H})$, fix a state $x \in \overline{H}$ and arbitrary direction $k \in K$. 
Consider the difference quotient of $T_\lambda \psi$ at $x$ in the direction $Bk$ with step size $h \in \mathbb{R} \setminus \{0\}$,
\begin{equation}
\frac{T_\lambda \psi(x + h B k) - T_\lambda \psi(x)}{h} = \int_0^\infty e^{-\lambda t} \frac{P_t[\psi](x + h B k) - P_t[\psi](x)}{h} \, dt.
\end{equation}
To evaluate the limit as $h \to 0$, we apply Lebesgue's Dominated Convergence Theorem (DCT). 
By the partial smoothing hypothesis, the map $x \mapsto P_t[\psi](x)$ is $B$-differentiable for every fixed $t > 0$. Therefore, as $h \to 0$, the integrand converges pointwise for every $t > 0$:
\begin{equation}
    \lim_{h \to 0} e^{-\lambda t} \frac{P_t[\psi](x + h B k) - P_t[\psi](x)}{h} = e^{-\lambda t} \langle \nabla^B P_t[\psi](x), k \rangle_K.
\end{equation}
Note that, for any fixed $t > 0$, the map $s \mapsto P_t[\psi](x + s B k)$ is differentiable. By the Mean Value Theorem, for each $h \neq 0$, there exists some $\theta \in (0, 1)$ depending on $x, h, k$, and $t$ such that:
\begin{equation}
    \frac{P_t[\psi](x + h B k) - P_t[\psi](x)}{h} = \langle \nabla^B P_t[\psi](x + \theta h B k), k \rangle_K.
\end{equation}
Taking the absolute value and applying the partial smoothing bound \eqref{eq:smoothing_bound_recalled}, we obtain an estimate that is uniform w.r.t.\ $h$ and $\theta$. We have
\begin{align}
    \Bigg| \frac{P_t[\psi](x + h B k) - P_t[\psi](x)}{h} \Bigg| &\le \|\nabla^B P_t[\psi](x + \theta h B k)\|_{\mathcal{L}(K, \mathbb{R})} \|k\|_K \\
    &\le \kappa_0 (1 \vee t^{-\gamma}) \|\psi\|_\infty \|k\|_K.
\end{align}
Multiplying by the discount factor $e^{-\lambda t}$, we find that the integrand is strictly dominated by the function
\begin{equation}
    g(t) \coloneqq e^{-\lambda t} \kappa_0 (1 \vee t^{-\gamma}) \|\psi\|_\infty \|k\|_K.
\end{equation}
Since $\gamma \in (0, 1)$, the singularity at $t=0$ is integrable and, since $\lambda > 0$, the exponential decay guarantees integrability at infinity. Consequently, $g \in L^1(0, \infty;\, \bR)$ and independent of $h$.

Since the conditions of the DCT are met, we can interchange the limit and the integral and find
\begin{equation}
    \lim_{h \to 0} \frac{T_\lambda \psi(x + h B k) - T_\lambda \psi(x)}{h} = \int_0^\infty e^{-\lambda t} \langle \nabla^B P_t[\psi](x), k \rangle_K \, dt.
\end{equation}
Because the right-hand side is a bounded linear functional acting on $k \in K$ due to the dominating function $g(t)$, this proves that $T_\lambda \psi$ is $B$-G\^ateaux differentiable on $\overline{H}$ with the given derivative formula.

Finally, to establish the boundedness, we consider the operator norm of the derivative. Since the Bochner integral of a bounded function preserves the norm inequality, we have
\begin{align}
    \|\nabla^B T_\lambda \psi(x)\|_{\mathcal{L}(K, \mathbb{R})} &= \sup_{\|k\|_K=1} \left| \int_0^\infty e^{-\lambda t} \langle \nabla^B P_t[\psi](x), k \rangle_K \, dt \right| \\
    &\le \int_0^\infty e^{-\lambda t} \|\nabla^B P_t[\psi](x)\|_{\mathcal{L}(K, \mathbb{R})} \dd t\\
    &\le \|\psi\|_\infty \int_0^\infty e^{-\lambda t} \kappa_0 (1 \vee t^{-\gamma}) \dd t.
\end{align}
Since this bound holds for all $x \in \overline{H}$, we obtain $\|\nabla^B T_\lambda \psi\|_\infty \le C_{\lambda, \gamma} \|\psi\|_\infty$.
\end{proof}

\begin{proposition}\label{prop:Tlambda_B_diff_UCb}
Assume the partial smoothing hypothesis holds with the operator $\widehat{\Lambda}^{\cP,B}(t)$ satisfying $\|\widehat{\Lambda}^{P,B}(t)\|_{\mathcal{L}(K, L^2_\rho)} \le \kappa_0 (1 \vee t^{-\gamma})$ for some $\gamma \in (0, 1)$. 
Then, for every $\lambda > 0$ and $\psi \in \LUC$, the derivative $\nabla^B T_\lambda \psi$ belongs to $\UCb(\overline{H};\, K)$.
\end{proposition}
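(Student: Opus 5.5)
Boundedness of $\nabla^B T_\lambda \psi$ is already given by Lemma \ref{lemma:Tlambda_B_diff}, so only uniform continuity in $x$ remains. We start from the representation \eqref{eq:Tlambda_derivative_formula} together with the Bismut--Elworthy--Li formula of Proposition \ref{prop-lift-partial-smoothing}. For $x,y\in\Hbar$ and $k\in K$ with $|k|\le1$ this gives
\begin{equation}
\langle \nabla^B T_\lambda\psi(x)-\nabla^B T_\lambda\psi(y),k\rangle
=\int_0^\infty e^{-\lambda t}\,\mathbb{E}\Big[\big(\psi(\overline{e^{tA}}x+W_A(t))-\psi(\overline{e^{tA}}y+W_A(t))\big)\,\xi_t^k\Big]\dd t,
\end{equation}
where $\xi_t^k\coloneqq\langle\widehat\Lambda^{\cP,B}(t)k,\Sigma_t^{-1/2}\Upsilon^{\cP}_\infty W_A(t)\rangle_{L^2_\rho}$ and $\Sigma_t=\Upsilon^{\cP}_\infty Q_t(\Upsilon^{\cP}_\infty)^*$. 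Here we use that $\hat\psi(\Upsilon^{\cP}_\infty(\cdot))=\psi(\cdot)$, so the integrand in the formula can be rewritten in terms of $\psi$ itself. The key point of this step is that the difference of derivatives becomes a difference of values of $\psi$, weighted by a Gaussian variable that does not depend on the base point.

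Next, $\xi_t^k$ is a centered real Gaussian random variable (a white-noise functional) with variance $|\widehat\Lambda^{\cP,B}(t)k|_{L^2_\rho}^2$. Hence, by Cauchy--Schwarz or simply $\mathbb{E}|\xi|\le(\mathbb{E}\xi^2)^{1/2}$,
\begin{equation}
\mathbb{E}|\xi_t^k|\le \|\widehat\Lambda^{\cP,B}(t)\|_{\mathcal{L}(K,L^2_\rho)}\le\kappa_0(1\vee t^{-\gamma}).
\end{equation}
Using the modulus $\rho_\psi$ exactly as in the proof of Lemma \ref{lemma:Tlambda_in_UCb}, for $|x-y|_{\Hbar}\le\delta$ we obtain
\begin{equation}
\sup_{|k|\le1}\big|\langle \nabla^B T_\lambda\psi(x)-\nabla^B T_\lambda\psi(y),k\rangle\big|\le\int_0^\infty e^{-\lambda t}\kappa_0(1\vee t^{-\gamma})\,\rho_\psi(Me^{\omega t}\delta)\dd t.
\end{equation}
We then conclude by dominated convergence as $\delta\downarrow0$, with dominating function $2\|\psi\|_\infty e^{-\lambda t}\kappa_0(1\vee t^{-\gamma})\in L^1$; this is the same integrable bound as in Lemma \ref{lemma:Tlambda_B_diff}.

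The main obstacle is justifying the first step rigorously. We must check that the expectation formula holds with $\psi$ evaluated at $\overline{e^{tA}}x+W_A(t)$ in $\Hbar$, i.e.\ that the lift commutes appropriately so that $\hat\psi(\Upsilon^{\cP}_\infty x+\Upsilon^{\cP}_\infty W_A(t))=\psi(\overline{e^{tA}}x+W_A(t))$ almost surely. We must also verify that $\xi_t^k$ is a genuine $L^2(\Omega)$ variable with the stated variance, via a limit of finite-dimensional approximations of $\Sigma_t^{-1/2}$ in the sense of Da Prato--Zabczyk white noise maps. We would resolve the first point by linearity of $\Upsilon^{\cP}_\infty$ together with the identity $\overline{\cP e^{sA}}\,\overline{e^{tA}}=\overline{\cP e^{(s+t)A}}$. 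For the second, we would invoke the standard isometry of the white noise mapping. Note that the Gâteaux derivative is then automatically continuous and uniformly bounded, which would also give Fréchet differentiability if needed later.
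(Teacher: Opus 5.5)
Your proposal is correct and follows essentially the same route as the paper. Like the paper, you write the difference of derivatives through the Bismut--Elworthy--Li formula with $\psi = \hat\psi\circ\Upsilon^{\cP}_\infty$, bound the difference of $\psi$-values by $\rho_\psi(Me^{\omega t}\delta)$, bound $\mathbb{E}|\xi_t^k|$ by $\kappa_0(1\vee t^{-\gamma})$, and conclude by dominated convergence; your closing remarks on the lift identity and on the variance of the white-noise functional make explicit points the paper takes for granted.
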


\begin{proof}
Let $\psi \in \LUC$. From Lemma \ref{lemma:Tlambda_B_diff}, we know that $\nabla^B T_\lambda \psi$ exists, is bounded and given by
\begin{equation}
\nabla^B T_\lambda \psi(x) = \int_0^\infty e^{-\lambda t} \nabla^B P_t[\psi](x) \dd t.
\end{equation}
Since $\psi \in \UCb(\overline{H})$, it admits a modulus of continuity $\rho_\psi(\delta) \coloneqq \sup_{|x-y|_{\overline{H}} \le \delta} |\psi(x) - \psi(y)|$ with $\rho_\psi(\delta) \le 2\|\psi\|_\infty$ and $\lim_{\delta \downarrow 0} \rho_\psi(\delta) = 0$.

To prove uniform continuity, let $x, y \in \overline{H}$ such that $|x - y|_{\overline{H}} \le \delta$. Evaluating the norm of the difference of the derivatives in $K$, we find
\begin{equation}\label{eq:diff_derivative_integral}
    \|\nabla^B T_\lambda \psi(x) - \nabla^B T_\lambda \psi(y)\|_K \le \int_0^\infty e^{-\lambda t} \|\nabla^B P_t[\psi](x) - \nabla^B P_t[\psi](y)\|_K \dd t.
\end{equation}

By Proposition \ref{prop-lift-partial-smoothing}, the action of the $B$-derivative in any direction $k \in K$ is given by
\begin{equation}
    \langle \nabla^B P_t[\psi](x), k \rangle_K = \mathbb{E} \Big[ \psi(\overline{e^{tA}}x + W_A(t)) \langle \widehat{\Lambda}^{P,B}(t) k, \Sigma_t^{-1/2} \Upsilon^P_\infty W_A(t) \rangle_{L^2_\rho} \Big],
\end{equation}
where $\Sigma_t = \Upsilon^P_\infty Q_t (\Upsilon^P_\infty)^*$ is the covariance operator of the lifted noise. 
Taking the difference between the derivative at $x$ and $y$ evaluated in the direction $k$, we obtain
\begin{align*}
&|\langle \nabla^B P_t[\psi](x) - \nabla^B P_t[\psi](y), k \rangle_K| \\
&\le \mathbb{E} \Big[ \Big|\psi(\overline{e^{tA}}x + W_A(t)) - \psi(\overline{e^{tA}}y + W_A(t))\Big| \, \Big| \langle \widehat{\Lambda}^{P,B}(t) k, \Sigma_t^{-1/2} \Upsilon^P_\infty W_A(t) \rangle_{L^2_\rho} \Big| \Big].
\end{align*}
We can bound the first difference using the modulus of continuity exactly as before
\begin{equation}
    |\psi(\overline{e^{tA}}x + W_A(t)) - \psi(\overline{e^{tA}}y + W_A(t))| \le \rho_\psi( \|\overline{e^{tA}}\|_{\mathcal{L}(\overline{H})} |x - y|_{\overline{H}} ) \le \rho_\psi( M e^{\omega t} \delta ).
\end{equation}
Since this bound is deterministic, we can pull it out of the expectation and find
\begin{equation}
    |\langle \nabla^B P_t[\psi](x) - \nabla^B P_t[\psi](y), k \rangle_K| \le \rho_\psi( M e^{\omega t} \delta ) \, \mathbb{E} \left[ \left| \langle \widehat{\Lambda}^{P,B}(t) k, \Sigma_t^{-1/2} \Upsilon^P_\infty W_A(t) \rangle_{L^2_\rho} \right| \right].
\end{equation}
Recall that the random variable $\tilde{Z} = \Sigma_t^{-1/2} \Upsilon^P_\infty W_A(t)$ is a standard cylindrical Gaussian on $L^2_\rho$. Thus, the inner product $\langle h, \tilde{Z} \rangle_{L^2_\rho}$ is a real-valued, centered Gaussian with variance $\|h\|_{L^2_\rho}^2$. 
By the Cauchy-Schwarz inequality in $L^2(\Omega, \mathbb{P})$, the expectation of its absolute value is bounded by its standard deviation:
\begin{equation}
\mathbb{E} \left[ \left| \langle \widehat{\Lambda}^{P,B}(t) k, Z \rangle_{L^2_\rho} \right| \right] \le \|\widehat{\Lambda}^{P,B}(t) k\|_{L^2_\rho} \le \|\widehat{\Lambda}^{P,B}(t)\|_{\mathcal{L}(K, L^2_\rho)} \|k\|_K \le \kappa_0(1 \vee t^{-\gamma}) \|k\|_K.
\end{equation}
Taking the supremum over all $k \in K$ with $\|k\|_K = 1$, we obtain the uniform bound for the norm of the difference as
\begin{equation}\label{eq:diff_derivative_bound}
    \|\nabla^B P_t[\psi](x) - \nabla^B P_t[\psi](y)\|_K \le \kappa_0(1 \vee t^{-\gamma}) \rho_\psi( M e^{\omega t} \delta ).
\end{equation}

Substituting \eqref{eq:diff_derivative_bound} back into the integral  in equation \eqref{eq:diff_derivative_integral}, we find that the modulus of continuity of the derivative is bounded as
\begin{equation}
    \tilde{\rho}_{\nabla}(\delta) \coloneqq \sup_{|x-y|_{\overline{H}} \le \delta} \|\nabla^B T_\lambda \psi(x) - \nabla^B T_\lambda \psi(y)\|_K \le \int_0^\infty e^{-\lambda t} \kappa_0(1 \vee t^{-\gamma}) \rho_\psi( M e^{\omega t} \delta ) \dd t.
\end{equation}

Analogously to the proof of Lemma \ref{lemma:Tlambda_B_diff}, using Lebesgue’s Dominated Convergence Theorem, we now find that $\nabla^B T_\lambda \psi$ is uniformly continuous, which concludes the proof.
\end{proof}

\section{Main Results}\label{sec:pseudo_resolvent_extention}

In this section, we want to apply classical results about pseudo resolvents to show that the stationary Hamilton--Jacobi--Bellman (HJB) equation associated with the control problem introduced in Section \ref{sec:setup} admits a unique mild solution for all $\lambda >0$. 
More precisely, for $x \in \Hbar$, we consider the equation
\begin{equation}\label{eq:elliptic_HJB}
    \lambda v(x) = \cA v(x) +\ell_0(x)+ \Hmin \big(\nabla^B v(x)\big),
\end{equation}
where $\Hmin(p) \coloneqq \inf_{u \in \cU} \Hcv(p;\, u) = \inf_{u \in \cU} \big\{ \ip{p}{u} + \ell_1(u) \big\}$ and $\cA$ is the generator of the uncontrolled OU semigroup $P_t$.

As already the case for the underlying process, we are typically prevented from finding classical solutions. Thus, we consider solutions in the mild sense.
\begin{definition}\label{def:mild_sol_HJB}
    We say that a function $v\in C^{1,B}_{b}(\overline{H})$ is a \emph{mild solution} of equation \eqref{eq:elliptic_HJB} if, for every $x \in \Hbar$, it satisfies
    \begin{equation}\label{eq:hjb_mild_sol}
    v(x) = T^{\ell_0}_\lambda v (x) \coloneqq T_\lambda\big[\ell_0 + \Hmin(\nabla^B v)\big] (x) = \int_{0}^{\infty} e^{-\lambda t} P_{t}\Big[\ell_{0}(\cdot) + \Hmin \big(\nabla^{B}v(\cdot)\big)\Big](x) \dd t.
    \end{equation}
\end{definition}

As was shown in \cite[Theorem 4.6 and Lemma 4.8]{BoGo25}, there exists a $\lambda_0 > 0$ such that, for all $\lambda \geq \lambda_0$, the non-linear operator $T^{\ell_0}_\lambda v$ is the unique mild solution of equation \eqref{eq:elliptic_HJB} in the class of $\LUC^{1, B}(\Hbar)$.

Motivated by this result we introduce the solution mapping $R(\lambda)$.
\begin{definition}
    Let $\lambda_0 >0$ be as in \cite[Theorem 4.6]{BoGo25} and, for $\lambda \geq \lambda_0$ and $\psi \in \LUC(\Hbar)$, let $v_{\lambda,\psi}$ denote the unique mild solution to equation \eqref{eq:elliptic_HJB}, cf.\ \cite[Lemma 4.8]{BoGo25}. Then, the \emph{solution map} $R(\lambda)\colon \LUC(\Hbar) \to \LUC^{1, B}(\Hbar)$ is given by
    \begin{equation}
        \psi \mapsto R(\lambda) (\psi) \coloneqq v_{\lambda,\psi}.
    \end{equation}
\end{definition}

Our main result, Theorem \ref{thm:ext_all_lambda}, then intuitively states that, given that the minimum value Hamiltonian $\Hmin$ is concave, we can extend the the solution map $R(\lambda)$ to all $\lambda > 0$ on the set $\LUC(\Hbar)$.

\begin{theorem}\label{thm:ext_all_lambda}
    Let $\Hmin$ be concave. Then, for all $\lambda > 0$ and $\ell_0 \in \LUC(\Hbar)$, there exists a unique mild solution $v \in \LUC^{1,B}(\Hbar)$ to equation \eqref{eq:elliptic_HJB} such that
    \begin{equation}
        v(x) = R(\lambda)(\ell_0) = (\lambda - \cB)^{-1}(\ell_0),
    \end{equation}
    where $\cB \colon D(\cB) \subseteq \LUC^{1,B}(\Hbar) \to \LUC(\Hbar)$ is a unique, linear, maximally dissipative operator.
\end{theorem}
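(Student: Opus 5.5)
The plan is to read the family $\{R(\lambda)\}_{\lambda\ge\lambda_0}$ as a (nonlinear) pseudo-resolvent, identify its generator, and then use monotone operator theory to extend the resolvent to every $\lambda>0$, following \cite[Section 4.6.2.2]{FaGoSw17}. On $X\coloneqq\LUC(\Hbar)$ with the supremum norm (closed in the Banach space $\mathcal{S}^{\cP}_\infty(\Hbar)$), I define the operator $\cB$ by its graph
\begin{equation}
  D(\cB)\coloneqq R(\lambda)\big(\LUC(\Hbar)\big),\qquad \cB v\coloneqq \lambda v-R(\lambda)^{-1}v .
\end{equation}
In words, $\cB v=\cA v+\Hmin(\nabla^B v)$ in the mild sense. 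Lemmas \ref{lemma:Tlambda_in_Sp}, \ref{lemma:Tlambda_in_UCb} and Proposition \ref{prop:Tlambda_B_diff_UCb} guarantee that $T_\lambda$ maps $\LUC$ into $\LUC^{1,B}$ with a uniformly continuous $B$-gradient. Hence every object stays in the right class, and $\Hmin(\nabla^B v)\in\LUC$ because $\Hmin$ is Lipschitz.

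The first step is to show that $\cB$ is independent of $\lambda\ge\lambda_0$. This is the resolvent identity: if $v=R(\lambda)\psi$, then $v=R(\mu)(\psi+(\mu-\lambda)v)$. I would prove it by applying the linear resolvent identity $T_\mu=T_\lambda+(\lambda-\mu)T_\mu T_\lambda$ (on $\mathcal{S}^{\cP}_\infty$) to the mild formulation $v=T_\lambda[\psi+\Hmin(\nabla^B v)]$, and then using uniqueness of the fixed point for $\mu\ge\lambda_0$.

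The second step, which is the core of the argument, is dissipativity of $\cB$. I need
\begin{equation}
  \|R(\lambda)\psi_1-R(\lambda)\psi_2\|_\infty\le\tfrac1\lambda\|\psi_1-\psi_2\|_\infty ,
\end{equation}
i.e.\ $R(\lambda)$ is $1/\lambda$-Lipschitz in sup norm rather than merely Lipschitz with a worse constant coming from the contraction. The plan is to use concavity of $\Hmin$ together with the control representation. Since $\Hmin$ is an infimum of affine maps $p\mapsto\ip{p}{u}+\ell_1(u)$, the difference $w=v_1-v_2$ satisfies a linear inequality of the form
\begin{equation}
  \lambda w\le \cA w+\ip{\nabla^B w}{u^*}+(\psi_1-\psi_2)
\end{equation}
in mild form, where $u^*$ is a measurable selection of a supergradient at $\nabla^B v_2$. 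Equivalently, $w$ is dominated by the resolvent of a linear Markov (Girsanov-drifted) semigroup applied to $\psi_1-\psi_2$, and that resolvent has norm $1/\lambda$. Carrying this out rigorously in the mild setting is the main obstacle: $B$ is unbounded and there is no structure condition, so I cannot simply apply Girsanov. What I would try first is to work at the level of the mild equation. I would treat the drift as a perturbation, solve the linear equation $w=T_\lambda[\chi+\ip{\nabla^B w}{u^*(\cdot)}]$ by contraction for large $\lambda$ using Lemma \ref{lemma:Tlambda_B_diff}, and then show positivity preservation and preservation of constants. This yields the sup bound $\|\chi\|_\infty/\lambda$ for $\lambda\ge\lambda_0$. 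Via the resolvent identity, that bound gives dissipativity of $\cB$, which is a $\lambda$-independent property.

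The final step extends to all $\lambda>0$. From the previous steps, $\cB$ is dissipative and $\operatorname{Range}(\lambda-\cB)=\LUC$ for $\lambda\ge\lambda_0$, so $\cB$ is m-dissipative. By the standard result \cite{barbu2010} that surjectivity of $\lambda-\cB$ for one $\lambda>0$ implies it for all $\lambda>0$, $(\lambda-\cB)^{-1}$ is defined on all of $\LUC$ for every $\lambda>0$. Concretely, for $\mu<\lambda_0$ I solve $v=R(\lambda)(\psi+(\lambda-\mu)v)$, which is a fixed point of a map with Lipschitz constant $(\lambda-\mu)/\lambda<1$, and iterate downward in $\mu$. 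The fixed point satisfies $v\in D(\cB)\subseteq\LUC^{1,B}$. Reversing the resolvent-identity computation of the first step shows that $v$ solves the mild equation \eqref{eq:hjb_mild_sol} with discount $\mu$. Uniqueness of the mild solution follows from injectivity of $\mu-\cB$, which dissipativity provides. Setting $\psi=\ell_0$ gives $v=(\lambda-\cB)^{-1}\ell_0$.

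A caveat on the theorem's wording: the operator $\cB$ constructed this way is generally not linear, because $\Hmin$ is nonlinear. I would read "linear" in the statement as loose and prove that $\cB$ is a unique (nonlinear) m-dissipative operator.
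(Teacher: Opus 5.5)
The genuine gap is your Step 2, which is the only place where anything beyond the large-$\lambda$ fixed point of \cite{BoGo25} is needed. After linearizing, everything hinges on the drifted linear resolvent $S_\lambda\colon \chi\mapsto w$, $w=T_\lambda[\chi+\ip{\nabla^B w}{u^*}]$, being positivity preserving. Without that, the mild inequality $w\le T_\lambda[\chi+\ip{\nabla^B w}{u^*}]$ does not give $w\le S_\lambda\chi$, and $S_\lambda 1=1/\lambda$ alone yields no sup bound. You give no mechanism for this positivity. The contraction construction produces $S_\lambda\chi=\sum_{n\ge0}\big(T_\lambda\ip{\nabla^B\,\cdot\,}{u^*}\big)^n T_\lambda\chi$, whose terms contain first-order $B$-derivatives and have no sign. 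As you note yourself, no Girsanov or Feynman--Kac representation is available: there is no structure condition, $B$ is not valued in $H$, and the drift $u^*$ is merely measurable (and would also have to be chosen liftable for the iteration to be defined). So the $1/\lambda$-Lipschitz bound, which is the entire content of the extension, is asserted rather than proved.

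The idea you are missing, which the paper uses, bypasses linearization and positivity altogether: approximate the first-order part by a sup-norm contraction. Lemma \ref{lemma:nonlin_approx} provides $N_\eps$ with $\|N_\eps u-N_\eps v\|_\infty\le\|u-v\|_\infty$ and $M_\eps u=\eps^{-1}(N_\eps u-u)-\Hmin(\nabla^B u)\to0$ uniformly for $u\in\LUC^{1,B}(\Hbar)$. The paper builds $N_\eps$ from a Nisio-type auxiliary control problem; since $U$ is bounded, the one-step operator $N_\eps u(x)=\inf_{a\in U}\{u(x+\eps Ba)+\eps\ell_1(a)\}$ already has both properties. By the resolvent identity with $\mu+1/\eps$, $u=R(\mu)\phi$ satisfies $u=T_{\mu+1/\eps}[\phi+\eps^{-1}N_\eps u-M_\eps u]$, and likewise for $v=R(\mu)\psi$. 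Since $\|T_\nu f\|_\infty\le\|f\|_\infty/\nu$, subtracting gives $\mu\|u-v\|_\infty\le\|\phi-\psi\|_\infty+\|M_\eps u\|_\infty+\|M_\eps v\|_\infty$. Letting $\eps\downarrow0$ gives exactly the estimate you need for every $\mu\ge\lambda_0$.

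The rest of your plan matches the paper:
\begin{itemize}
\item the resolvent identity from the linear identity for $T_\lambda$ plus uniqueness at large $\lambda$;
\item $\cB$ read off from the graph of $R(\lambda)$;
\item m-dissipativity from the $1/\lambda$ bound and surjectivity, and extension to all $\lambda>0$;
\item identification with mild solutions through the linear resolvent identity.
\end{itemize}
For uniqueness, state explicitly that every mild solution at $\mu$ equals $R(\lambda)(\psi+(\lambda-\mu)v)$ and hence lies in $D(\cB)$. Your extension needs no downward iteration, since $v\mapsto R(\lambda)(\psi+(\lambda-\mu)v)$ has Lipschitz constant $1-\mu/\lambda<1$ for every $\mu\in(0,\lambda)$. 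For $R(\lambda)^{-1}$ to be single-valued you also need injectivity of $R(\lambda)$. The paper derives it from $T_\mu\psi=T_\mu\phi$ via uniqueness of the Laplace transform; alternatively, let $\cB$ be multivalued.

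Your caveat about linearity is correct. $R(\lambda)$ is not linear (with $\ell_1\equiv1$ one gets $R(\lambda)0\equiv1/\lambda$), so $\cB$ cannot be linear. It is the nonlinear m-dissipative theory you invoke that applies, not the linear results of \cite{Yo65} and \cite{Pa83} cited in the paper.
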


The proof of the above theorem can be found in Section \ref{sec:proof_main_thm}.
Generally, the proof combines two classical statements adapted to our lifted setting: Firstly, we show that the solution map $R(\lambda)$ is a pseudo resolvent and consequently, using some injectivity, we can find an unique operator $\cB$ such that the solution map $R(\lambda)$ is the resolvent of $\cB$, cf.\ \cite[Chapter VII.4, Theorem 1]{Yo65}.
In a second step, we show the dissipativity of $R(\lambda)$ and then use it to extend the solution map for all $\lambda>0$, cf.\ \cite[Chapter 1 Theorem 4.5]{Pa83}. For the proof of the second step, and in particular the dissipativity of $R(\lambda)$, we use that the minimum value Hamiltonian $\Hmin$ is concave.

\subsection{Auxiliary Results}

Before we proceed to the proof of Theorem \ref{thm:ext_all_lambda}, we provide some auxiliary results that allow us to characterize $R(\lambda)$ as an injective and dissipative pseudo resolvent.

\begin{proposition}\label{lem:resolvent_properties}
    Let $\lambda_0 > 0$ be as in \cite[Theorem 4.6]{BoGo25}. Then we have
    \begin{enumerate}[(i)]
        \item\label{item:id_of_resolvents} For all $\mu, \nu \geq \lambda_0$, the \emph{resolvent identity} holds, i.e., we have
        \begin{equation}\label{eq:prop:id_of_resolvents}
            R(\mu) = R(\nu) \circ \big( I + (\nu - \mu) R(\mu) \big),
        \end{equation}

        \item\label{item:injective_resolvent} For all $\mu \geq \lambda_0$ and $\phi \in \LUC(\Hbar)$, the map $\phi \mapsto R(\mu)\phi$ is injective,

        \item\label{item:lipschitz_resolvent} For all $\mu \geq \lambda_0$ and $\phi, \psi \in \LUC(\Hbar)$, we have
        \begin{equation}\label{eq:prop:contractive_sol_map}
            \| R(\mu)\phi - R(\mu)\psi \| \leq \frac1\mu \| \phi - \psi \|.
        \end{equation}
    \end{enumerate}
\end{proposition}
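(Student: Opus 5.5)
We prove the three items separately. Each one is reduced, through the fixed-point characterisation of $R(\mu)$ given by Definition \ref{def:mild_sol_HJB} and the uniqueness statement of \cite[Theorem 4.6 and Lemma 4.8]{BoGo25}, to a property of the \emph{linear} resolvent $T_\lambda$ from \eqref{def:resolvent}.

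\textbf{Linear resolvent identity.} The basic tool is
\begin{equation}
T_\mu g = T_\nu\big[g + (\nu-\mu)T_\mu g\big], \qquad g \in \mathcal{S}^{\cP}_\infty(\Hbar).
\end{equation}
To prove it, we write $T_\nu T_\mu g = \int_0^\infty\int_0^\infty e^{-\nu s}e^{-\mu r}P_{s+r}g \dd r\dd s$, using the semigroup property $P_sP_r = P_{s+r}$. We then change variables $t=s+r$ and apply Fubini, which is legitimate because $g$ is bounded. This gives $(\nu-\mu)T_\nu T_\mu g = T_\mu g - T_\nu g$.

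\textbf{Item \ref{item:id_of_resolvents}.} Let $v = R(\mu)\phi$ and set $g \coloneqq \phi + \Hmin(\nabla^B v)$, so that $v = T_\mu g$. The linear identity yields
\begin{equation}
v = T_\nu\big[\phi + (\nu-\mu)v + \Hmin(\nabla^B v)\big].
\end{equation}
This says exactly that $v$ is a mild solution, with discount $\nu$, of \eqref{eq:elliptic_HJB} with running cost $\phi + (\nu-\mu)v$. This datum lies in $\LUC(\Hbar)$ because $v \in \LUC^{1,B}(\Hbar)$. Uniqueness for $\nu \ge \lambda_0$ in \cite{BoGo25} then gives $v = R(\nu)\big(\phi + (\nu-\mu)R(\mu)\phi\big)$, which is \eqref{eq:prop:id_of_resolvents}. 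One point needs checking: $g$ must be liftable, so that $T_\mu g$ and the lifting of Lemma \ref{lemma:Tlambda_in_Sp} make sense. This holds since $\nabla^B v$ is a function of the lifted path, as established in \cite{BoGo25}.

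\textbf{Item \ref{item:injective_resolvent}.} Suppose $R(\mu)\phi = R(\mu)\psi = v$. Subtracting the two fixed-point equations cancels the common term $T_\mu[\Hmin(\nabla^B v)]$, so $T_\mu f = 0$ with $f = \phi-\psi \in \LUC(\Hbar)$. The linear identity then gives $T_\nu f = T_\mu f + (\mu-\nu)T_\nu T_\mu f = 0$ for every $\nu>0$. Since $f$ is uniformly continuous, the map $t \mapsto P_t f(x)$ is bounded and continuous at $t=0$. Indeed,
\begin{equation}
|P_t f(x) - f(x)| \le \bE\,\rho_f\big(|\overline{e^{tA}}x - x| + |W_A(t)|\big) \to 0
\end{equation}
by strong continuity of $\overline{e^{tA}}$ and $W_A(t)\to 0$ in probability. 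Hence $\nu T_\nu f(x) \to f(x)$ as $\nu\to\infty$, and therefore $f \equiv 0$.

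\textbf{Item \ref{item:lipschitz_resolvent}.} This is the main obstacle. A direct estimate on the fixed-point equation fails, because $\Hmin$ acts on $\nabla^B v$ and the bound of Lemma \ref{lemma:Tlambda_B_diff} only controls $\|v_1-v_2\|_\infty$ with constant $C_{\mu,\gamma}$, not $1/\mu$. A Girsanov/linearisation argument, writing $\Hmin(p_1)-\Hmin(p_2) = \langle \beta, p_1-p_2\rangle$, is also blocked by the missing structure condition.

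My first attempt is therefore the control representation. For $\mu \ge \lambda_0$, the verification theorem accompanying \cite[Theorem 4.6]{BoGo25} identifies the mild solution $R(\mu)\phi$ with the value function $V$ of \eqref{eq:ctrl_val_fct}, with running cost $\ell_0=\phi$ and discount $\mu$. This relies on the fundamental identity obtained by applying the mild Itô/approximation argument to $X(\cdot;x,u)$. Granting this, for every admissible $u \in \cU$ we have
\begin{equation}
|J_\phi(x;u) - J_\psi(x;u)| \le \bE\int_0^\infty e^{-\mu s}\,|\phi-\psi|\big(X(s;x,u)\big)\dd s \le \frac1\mu\|\phi-\psi\|_\infty.
\end{equation}
Taking infima over $u$ preserves this bound and gives \eqref{eq:prop:contractive_sol_map}.

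If the identification with the value function is not directly available for general $\phi \in \LUC(\Hbar)$, I would fall back on an approximation. One route is to approximate $\Hmin$ by Hamiltonians arising from finitely many controls. Another is to prove the fundamental identity for $\phi$ in a dense subclass and pass to the limit using the continuity of $R(\mu)$ from the contraction argument of \cite{BoGo25}.
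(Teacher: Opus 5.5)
Items (i) and (ii) are fine. Item (i) is the paper's argument: the linear resolvent identity for $T_\lambda$, then uniqueness of the fixed point at discount $\nu\ge\lambda_0$. In (ii) you get $T_\nu(\phi-\psi)=0$ for all $\nu>0$ directly from the linear identity and conclude with $\nu T_\nu f(x)\to f(x)$. This is a bit leaner than the paper, which goes back through the nonlinear identity and then uses injectivity of the Laplace transform.

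Item (iii), however, has a genuine gap. Its whole content rests on the identification $R(\mu)\phi=V_\phi$ for every $\phi\in\LUC(\Hbar)$. You do not prove this, and the paper does not have it available: it only imports existence and uniqueness of mild solutions from \cite{BoGo25}. The identification is not routine. Even granting the fundamental identity, you only get $R(\mu)\phi(x)\le J_\phi(x;u)$ for all $u\in\cU$, since $\Hmin\le\Hcv$; that is, $R(\mu)\phi\le V_\phi$. Your two-sided estimate also needs the reverse inequality. That requires building $\eps$-optimal controls from $\nabla^B R(\mu)\phi$, i.e.\ solving a closed-loop equation whose feedback is only measurable in the state, with $B$ unbounded and without the structure condition that would allow a Girsanov or weak-formulation argument. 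Your fallbacks do not remove this dependence; the density argument, for instance, still needs $R(\mu)\phi=V_\phi$ on the dense class.

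The missing idea is that the $1/\mu$ bound follows analytically from (i) together with a sup-norm dissipativity property of $w\mapsto\Hmin(\nabla^B w)$. Take $u=R(\mu)\phi$, $v=R(\mu)\psi$ and $\nu=\mu+1/\eps$ in (i). This gives
\[
u=T_{\mu+1/\eps}\big[\phi+\tfrac1\eps u+\Hmin(\nabla^B u)\big],\qquad v=T_{\mu+1/\eps}\big[\psi+\tfrac1\eps v+\Hmin(\nabla^B v)\big].
\]
Now write $\tfrac1\eps u+\Hmin(\nabla^B u)=\tfrac1\eps N_\eps u-M_\eps u$, where:
\begin{itemize}
\item $(N_\eps)$ is the sup-norm nonexpansive Nisio-type family of Lemma \ref{lemma:nonlin_approx}, coming from the deterministic problem $y'=\alpha$ with running cost $g$ chosen so that $\Hmin(p)=\inf_{\|\alpha\|\le M}\{\langle\alpha,p\rangle+g(\alpha)\}$; this dual representation is where concavity of $\Hmin$ enters;
\item $\|M_\eps u\|\to0$ for $u\in\LUC^{1,B}(\Hbar)$.
\end{itemize}
Since $\|T_\lambda f\|_\infty\le\lambda^{-1}\|f\|_\infty$ for bounded $f$, this gives $(\mu+\tfrac1\eps)\|u-v\|\le\|\phi-\psi\|+\tfrac1\eps\|u-v\|+\|M_\eps u\|+\|M_\eps v\|$. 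Hence $\mu\|u-v\|\le\|\phi-\psi\|+\|M_\eps u\|+\|M_\eps v\|\to\|\phi-\psi\|$ as $\eps\downarrow0$. This is how the paper gets around the obstacle you correctly diagnosed: estimating the fixed-point equation directly gives the constant $C_{\mu,\gamma}$ instead of $1/\mu$.
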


\begin{proof}
    We proof the three points separately.

    \noindent{\bfseries Proof of \ref{item:id_of_resolvents}:}
    We first show that the identity of resolvents holds for the linear resolvent $T_\lambda$.
    To that end, let $\mu, \nu >0$ and $\psi \in \LUC(\Hbar)$. Note that by definition of $P_t$ in equation \eqref{eq:ou_semigroup}, for any $\lambda >0$ and $x \in \overline{H}$, we have
    \begin{align}
        P_t[T_\lambda \psi](x) &= \bE\big[ T_\lambda \psi \big(\overline{e^{tA}}x+\overline{W(t)}\big) \big]\\
        &= \bE \bigg[ \int_0^\infty e^{-\lambda s} P_s [\psi] \big(\overline{e^{tA}}x+\overline{W(t)}\big) \dd s \bigg]\\
        &= \int_0^\infty e^{-\lambda s} P_{s+t} [\psi] (x) \dd s,
    \end{align}
    where the last equality follows from the Fubini-Tonelli Theorem and the semigroup property of $P_t$.
    Moreover, for any $\mu, \nu > 0$ with $\mu \neq \nu$, $\psi \in \LUC(\Hbar)$, and $x \in \overline{H}$, we find
    \begin{align}
        T_\mu(T_\nu \psi) (x) &= \int_0^\infty e^{-\mu t} \int_0^\infty e^{-\nu s} P_{t+s}[\psi] (x) \dd s \dd t\\
        &= \int_0^\infty e^{-\mu t} \int_t^\infty e^{-\nu (r-t)} P_{r}[\psi] (x) \dd r \dd t\\
        &= \int_0^\infty e^{-(\mu-\nu) t} \int_t^\infty e^{-\nu r} P_{r}[\psi] (x) \dd r \dd t\\
        &=\frac{1}{\nu - \mu}\bigg(-\int_0^\infty e^{-\nu r} P_r[\psi](x)\dd r + \int_0^\infty e^{-\mu r} P_r[\psi](x)\dd r \bigg)\\
        &= \frac{1}{\nu - \mu} \big(- T_\nu \psi (x) + T_\mu \psi (x) \big),
    \end{align}
    where we used $r=t+s$ for the second equality and integration by parts for the fourth equality.
    Note that, in particular, this shows
    \begin{equation}\label{eq:proof:linear_resolvent}
        T_\mu = T_\nu \circ \big( I + (\nu-\mu) T_\mu \big).
    \end{equation}

    Now consider the case that $\mu, \nu >\lambda_0$. Using Definition \ref{def:mild_sol_HJB}, and the shorthand $u(x) = R(\nu)(\psi)(x)$, we find that showing the resolvent identity in equation \eqref{eq:prop:id_of_resolvents} is equivalent to showing
    \[
        T^{\psi + (\mu-\nu) u}_\mu (u) = u.
    \]
    Then, we find
    \begin{align}
        T_\mu^{\psi+(\mu-\nu)u}(u)
        &= T_\mu\big[\psi + (\mu - \nu)u + \Hmin(\nabla^B u)\big] \\
        &= T_\mu\big[\psi + \Hmin(\nabla^B u) + (\mu - \nu) T_\nu[\psi + \Hmin(\nabla^B u)]\big] \\
        &= T_\nu\big[\psi + \Hmin (\nabla^B u)\big]\\
        &= u,
    \end{align}
    where we used the form of the solution $R(\nu)(\psi)$, cf.\ Definition \ref{def:mild_sol_HJB}, for the second and last equalities and the result in equation \eqref{eq:proof:linear_resolvent} for the third equality.

    \smallskip
    \noindent{\bfseries Proof of \ref{item:injective_resolvent}:}
    Let $\psi,\phi \in \LUC(\Hbar)$, $\mu > \lambda_0$, and assume that $R(\mu)(\psi) = R(\mu)(\phi)=u$. By the definition of $R(\mu)$ and Definition \ref{def:mild_sol_HJB}, we then have
    \[
        T_\mu\big[\psi + \Hmin (\nabla^B u)\big] = T_\mu\big[\phi + \Hmin (\nabla^B u)\big],
    \]
    which implies that $T_\mu\psi=T_\mu\phi$. Now using the identity of resolvent from Part \ref{item:id_of_resolvents}, we find 
    \[
        R(\nu)(\psi + (\nu-\mu)u) = R(\nu)(\phi + (\nu-\mu)u)
    \]
    for any $\nu>\mu>\lambda_0$. Analogously to above, this yields that $T_\nu\psi=T_\nu\phi$.

    Note that as $P_t$ is a $\cK$-continuous semigroup, cf.\ \cite[Proposition B.89]{FaGoSw17}, we have that, for any $x \in \overline{H}$, the map $t \mapsto P_t[\psi-\phi](x)$ is continuous.
    Now, using the injectivity of the Laplace transform, we find that $P_t[\psi](x) = P_t[\phi](x)$ for all $t>0$.
    
    Again using the $\cK$-continuity of $P_t$, for all $x \in \overline{H}$, it follows that
    \[
        \lim_{t \downarrow 0} P_t[\psi](x) = \psi(x) \quad\text{and}\quad \lim_{t \downarrow 0} P_t[\phi](x) = \phi(x),
    \]
    which, in turn, implies that $\psi(x) = \phi(x)$ for all $x \in \overline{H}$.

    \smallskip\noindent
    {\bfseries Proof of \ref{item:lipschitz_resolvent}:}
    Let $\phi, \psi \in \UCb$ and $\mu > \lambda_0$. First note that by Proposition \ref{prop:Tlambda_B_diff_UCb}, we have that $u= R(\mu)(\phi)$ and $v= R(\mu)(\psi)$ are elements of $\LUC^{1,B}(\Hbar)$. Now, let $\eps > 0$ and set $\lambda = \mu + \frac{1}{\eps}$ in the resolvent identity in equation \eqref{eq:prop:id_of_resolvents}. Then, we find
    \begin{align}
        u = R\bigg( \mu + \frac{1}{\eps} \bigg) \bigg( \phi + \frac{1}{\eps} u \bigg) 
        &= T_{\mu + \frac{1}{\eps}} \bigg[ \phi + \frac{1}{\eps} u + \Hmin \big(\nabla^B u\big) \bigg],\\
        v = R\bigg( \mu + \frac{1}{\eps} \bigg) \bigg( \phi + \frac{1}{\eps} v \bigg) 
        &= T_{\mu + \frac{1}{\eps}} \bigg[ \phi + \frac{1}{\eps} v + \Hmin \big(\nabla^B v\big) \bigg].
    \end{align}
    Now using Lemma \ref{lemma:nonlin_approx} and the notation from equation \eqref{eq:generator_pre_lim} below, we find
    \begin{equation}
        u-v = T_{\mu + \frac{1}{\eps}} \bigg[ \phi - \psi + \frac{1}{\eps} \big( N_\eps u - N_\eps v \big) - M_\eps\phi + M_\eps \psi  \bigg].
    \end{equation}
    The contractiveness of $N_t$, cf.\ equation \eqref{eq:approx_contractive}, implies that
    \begin{equation}
        \|u-v\| \leq \frac{1}{\mu + \frac{1}{\eps}} \Big( \|\phi - \psi\| + \frac{1}{\eps} \|u-v\| + \|M_\eps\phi\| + \|M_\eps \psi \| \Big).
    \end{equation}
    Rearranging now yields
    \[
        \mu\|u-v\| \leq \|\phi - \psi\| +  \|M_\eps\phi\| + \|M_\eps \psi \|, 
    \]
    from which the statement follows by letting $\eps \downarrow 0$.

\end{proof}

\begin{lemma}\label{lemma:nonlin_approx}
    There exists a contractive family of operators $(N_t)_{t\geq 0}$ approximating the dynamics generated by the minimum value Hamiltonian $\Hmin$, i.e.,
    \begin{enumerate}[(a)]
        \item For $u,v \in \LUC(\Hbar)$ and $t \geq 0$, we have
        \begin{equation}\label{eq:approx_contractive}
            \|N_t u - N_t v\| \leq \|u - v\|
        \end{equation}

        \item For $u \in \LUC^{1, B}(\Hbar)$ and
        \begin{equation}\label{eq:generator_pre_lim}
            M_t u \coloneqq \frac{N_t u - u}{t} - \Hmin(\nabla^B u),
        \end{equation}
        we have that $\lim_{t \downarrow 0} \|M_t u\| = 0$
    \end{enumerate}
\end{lemma}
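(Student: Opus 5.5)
The natural candidate is the explicit ``Hamiltonian flow'' given by a Hopf--Lax-type formula in the direction of $B$. Since $\Hmin(p)=\inf_{u\in U}\{\ip{p}{u}+\ell_1(u)\}$ is an infimum of affine functions in $p$, the equation $\partial_t w = \Hmin(\nabla^B w)$ is solved by optimal control along the direction $B$. So for $t\ge 0$, $x\in\Hbar$, and $u\in\LUC(\Hbar)$ we set
\begin{equation}
    N_t u(x) \coloneqq \inf_{\alpha} \bigg\{ u\Big(x + \int_0^t B\alpha(s)\dd s\Big) + \int_0^t \ell_1\big(\alpha(s)\big)\dd s \bigg\},
\end{equation}
where the infimum runs over measurable $\alpha\colon[0,t]\to U$. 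A simpler variant, adequate for a first-order expansion, uses only constant controls:
\begin{equation}
    N_t u(x) \coloneqq \inf_{k\in U}\big\{u(x+tBk)+t\ell_1(k)\big\}.
\end{equation}
I would work with this constant-control version. It is defined pointwise on $\Hbar$, and because $B\in\cL(K,\Hbar)$ and $U$ is bounded, the shifts $tBk$ are uniformly small as $t\downarrow 0$.

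Part (a) is then immediate. For each fixed $k$, $|u(x+tBk)-v(x+tBk)|\le\|u-v\|_\infty$. An infimum of functions each perturbed by at most $\|u-v\|_\infty$ moves by at most that amount, so $\|N_tu-N_tv\|\le\|u-v\|$. I would also check that $N_t$ maps $\UCb(\Hbar)$ into itself: the modulus of continuity of $u$ is inherited uniformly in $k$, and $\ell_1$ is bounded. Liftability may be lost; if so, I would state (a) on $B_b(\Hbar)$, which is all that Proposition~\ref{lem:resolvent_properties}\ref{item:lipschitz_resolvent} actually uses, since there $T_{\mu+1/\eps}$ is applied and only sup-norms are estimated.

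For part (b), let $u\in\LUC^{1,B}(\Hbar)$. For fixed $k$, the mean value theorem along the segment $s\mapsto x+sBk$ gives
\begin{equation}
    \frac{u(x+tBk)-u(x)}{t} = \ip{\nabla^B u(x+\theta tBk)}{k}
\end{equation}
for some $\theta\in(0,1)$. Uniform continuity of $\nabla^B u$ on $\Hbar$ then yields
\begin{equation}
    \Big|\ip{\nabla^B u(x+\theta tBk)}{k}-\ip{\nabla^B u(x)}{k}\Big| \le \rho_{\nabla u}\big(t\|B\|\sup_{U}|k|\big)\sup_U|k|,
\end{equation}
and this bound is uniform in $x$ and $k\in U$. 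Hence
\begin{equation}
    \frac{N_tu(x)-u(x)}{t} = \inf_{k\in U}\Big\{\frac{u(x+tBk)-u(x)}{t}+\ell_1(k)\Big\}
\end{equation}
differs from $\inf_{k\in U}\{\ip{\nabla^Bu(x)}{k}+\ell_1(k)\}=\Hmin(\nabla^Bu(x))$ by at most that modulus, since two infima of uniformly close families are uniformly close. This gives $\|M_tu\|_\infty\to0$.

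The main obstacle I expect is the precise meaning of $\Hmin$. It is written with an infimum over $\cU$, but it must be read as the pointwise infimum over $U$, and the directional-derivative convention $\ip{\nabla^Bu(x)}{k}$ must match. A second point to watch is whether (a) is genuinely needed on the liftable class. If the proof of Proposition~\ref{lem:resolvent_properties}\ref{item:lipschitz_resolvent} requires $N_\eps u$ to be liftable, I would check that $x\mapsto u(x+tBk)$ is liftable: one has $\Upsilon^{\cP}_\infty(x+tBk)=\Upsilon^{\cP}_\infty x+t\Upsilon^{\cP}_\infty Bk$, so $\hat u$ is simply shifted. Taking the infimum over $k$, with measurability handled by restricting to a countable dense subset of $U$, which is legitimate by continuity in $k$, preserves liftability.
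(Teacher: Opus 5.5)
Your proof is correct, and it takes a different and more direct route than the paper. The paper defines $N_tu$ as the value function of an auxiliary drift problem $y'=\alpha$ with $\|\alpha\|\le M$, whose running cost $g$ is a concave conjugate of $\Hmin$ on the ball of radius $M$. Part (b) then takes three steps: the duality identity $\Hmin(p)=\inf_{\|\alpha\|\le M}\{\ip{\alpha}{p}+g(\alpha)\}$ (via supergradients and $M\ge\lipn{\Hmin}$), an upper bound through constant controls, and a lower bound through $\eps^2$-optimal controls.

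You instead use that $\Hmin$ is already an infimum of affine functions over the bounded set $U$, so no conjugate is needed. Then $N_tu$ and $\Hmin(\nabla^Bu)$ are infima over the same index set, and the single estimate $|\inf_k a_k-\inf_k b_k|\le\sup_k|a_k-b_k|$ gives both inequalities at once. It even gives the explicit rate $\|M_tu\|_\infty\le R\,\rho_\nabla(t\|B\|R)$, where $R=\sup_{k\in U}|k|$ and $\rho_\nabla$ is the modulus of continuity of $\nabla^Bu$. Your perturbations $x+tBk$ also stay in $\operatorname{Im}B$, which is exactly what makes the one-variable mean value theorem with $\nabla^Bu$ legitimate. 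By contrast, the paper's expansion \eqref{eq:proof:taylor_u} differentiates $u$ along a drift $a\in\Hbar$ and pairs it with $\nabla^Bu$, which tacitly requires $a\in\operatorname{Im}B$.

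The paper's route buys two things. It is more general, since it works for any concave Lipschitz Hamiltonian, not only one presented in inf-affine form. And its family is a genuine Nisio-type semigroup by dynamic programming, matching the heuristic stated after the lemma. Your constant-control $N_t$ is not a semigroup, but the lemma only asks for (a) and (b). If the semigroup property is wanted, your first formula with measurable $\alpha$ provides it, and your proof of (b) carries over: apply the mean value theorem to the endpoint $x+tB\bar k$ with $\bar k=\frac1t\int_0^t\alpha(s)\dd s$, and bound $\ip{\nabla^Bu(x)}{\alpha(s)}+\ell_1(\alpha(s))\ge\Hmin(\nabla^Bu(x))$ pointwise.

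One small correction to your side remark. Restricting the infimum to a countable dense subset of $U$ is not justified by continuity in $k$, because $\ell_1$ is only measurable and bounded. This does not affect the lemma, which does not assert liftability of $N_tu$. Moreover, Proposition~\ref{lem:resolvent_properties}\ref{item:lipschitz_resolvent} only needs $N_\eps u$ to be bounded and Borel, which your $\UCb$ estimate already gives.
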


Intuitively, we want $v(t)\colon t \mapsto N_t u$ to be the solution of the Cauchy problem
\begin{equation}
    \begin{cases}
        v'(t) = \Hmin (\nabla^B v(t)), \quad t>0,\\
        v(0) = u,
    \end{cases}
\end{equation}
where $\Hmin (\nabla^B v) = \inf_{k \in K} \Hcv (\nabla^B v; k)$. In other words, we expect $N_t$ to be a semigroup of Nisio-type w.r.t.\ the family of current value Hamiltonians. As usual in the literature, we prove the result using an auxiliary control problem. The proof below follows the strategy in \cite[Lemma 4.166]{FaGoSw17} with careful adjustments w.r.t.\ the notion of $\nabla^B$ and the underlying spaces.

\begin{proof}
    Consider the auxiliary drift control control problem
    \begin{equation}
        \begin{cases}
            y'(t) = \alpha(t) ,\quad t>0\\
            y(0)=x,
        \end{cases}
    \end{equation}
    where we denote the solution of the above equation as $y(s;\,x,\alpha)$, or simply $y(s)$ and the control $\alpha\colon [0, \infty) \to \overline{H}$ is taken from the set
    \[
        \Lambda_M = \Big\{ \alpha \in L^\infty(0,\infty;\, \overline{H}) \;\Big|\; \|\alpha(s)\|\leq M \text{ for a.e. } t \in [0, \infty) \Big\},
    \]
    where $M>0$ is a constant. Then, the cost functional, over which we aim to optimize our control, is given by
    \[
        J(t,x; \, \alpha) \coloneqq \int_0^t g(\alpha(s)) \dd s + u(y(t))
    \]
    with the running cost $g$ on $\overline{H}$ given by
    \[
        g(\alpha) = \sup_{\|p\| \leq M} \big\{\Hmin(p) - \ip{\alpha}{p} \big\}.
    \]
    Now, taking the infimum over all admissible controls, we find that the value function is
    \begin{equation}\label{eq:nonlin_approx:val_fct}
        v(t,x) = \inf_{\alpha \in \Lambda_M} J(t,x;\, \alpha)
    \end{equation}
    As stated in the heuristic explanation before, for $u \in \LUC(\Hbar)$ we now set $N_t u = v(t, \cdot)$ and proceed to show the claimed properties.

    \smallskip

    For the contractiveness of the operator $N_t$, let $u_1, u_2 \in \LUC(\Hbar)$. For $\eps > 0$ and $x\in \overline{H}$, we have
    \begin{align}
        \| N_{\eps} (u_1)(x) - N_{\eps} (u_2)(x) \| &\leq \sup_{\alpha \in \Lambda_M} \| u_1\big(y(\eps;\, x, \alpha)\big) - u_1\big(y(\eps;\, x, \alpha)\big)\|\\
        &\leq \|u_1 - u_2 \|.
    \end{align}

    \smallskip

    As the identification of the generator of $N_t$ is more involved, we show it in three steps:

    \noindent {\bfseries Step 1:}
    First we aim to show that, for $M \geq \lipn{\Hmin}$ and $p \in \overline{H}$ with $\|p\| \leq M$, we have
    \[
        \Hmin(p) = \inf_{\|\alpha\| \leq M} \big\{ \ip{\alpha}{p} + g(\alpha) \big\}.
    \]
    To that end, let $q \in \overline{H}$ with $\|q\| \leq M$ and set
    \[
        G(p) \coloneqq \inf_{\|\alpha\| \leq M} \big\{ \ip{\alpha}{p} + g(\alpha) \big\}.
    \]

    By the definition of $g$, we then find
    \[
        G(q) = \inf_{\|\alpha\| \leq M} \sup_{\|p\| \leq M} \big\{ \ip{\alpha}{q-p} + \Hmin(p) \big\} \geq \Hmin(q),
    \]
    where the inequality follows from choosing $p=q$.

    To show the other direction, first note that rewriting as above and then adding and subtracting $\Hmin(q)$, we have
    \[
        G(q) = \Hmin(q) + \inf_{\|\alpha\| \leq M} \sup_{\|p\| \leq M} \big\{ \ip{\alpha}{q-p} + \Hmin(p) - \Hmin(q) \big\}.
    \]
    As $\lipn{\Hmin} < \infty$, we know that the superdifferential of $\Hmin$ at $q$ is non-empty.
    Now using that as an infimum over affine functions $\Hmin$ is concave, we know that, for any $\xi$ in the superdifferential of $\Hmin$ at $q$, we have $\Hmin(p) - \Hmin(q) \leq \ip{\xi}{p-q}$ and consequently
    \[
        G(q) \leq \Hmin(q) + \inf_{\|\alpha\| \leq M} \sup_{\|p\| \leq M} \big\{ \ip{\alpha}{q-p} - \ip{\xi}{q-p}\big\}.
    \]
    This, in turn, implies that $G(q) \leq \Hmin(q)$ by choosing $\alpha = \xi$, which itself is possible because $\xi$ is an element of the superdifferential of $\Hmin$ and $M \geq \lipn{\Hmin}$.

    \noindent {\bfseries Step 2:}
    Now let $u \in \LUC^{1, B}(\Hbar)$ and $M\geq \max\big\{ \|\nabla^Bu\|, \lipn{\Hmin} \big\}$ and, for $\eps>0$, consider $M_t u$ as in equation \eqref{eq:generator_pre_lim}. Then, using the previous step and that the value function is bounded from above by the infimum over all constant controls, denoted by $a$, we find
    \begin{align}
        M_t (u)(x) &\leq \inf_{\|a\|\leq M}\bigg\{ g(a) + \frac{u(y(\eps;\, x, a)) - u(x)}{\eps} \bigg\} - \inf_{\|a\|\leq M}  \big\{ \ip{a}{\nabla^B u(x)} + g(a) \big\}\\
        &\leq \sup_{\|a\|\leq M}\bigg\{ \frac{u(y(\eps;\, x, a)) - u(x)}{\eps} - \ip{a}{\nabla^B u(x)} \bigg\}.
    \end{align}

    Due to $u \in \LUC^{1, B}(\Hbar)$ and the definition of $y(\eps;\, x, a)$, we have
    \begin{equation}\label{eq:proof:taylor_u}
        u(y(\eps;\, x, a)) - u(x) = \int_0^{\eps} \ip{a}{\nabla^B u (y(s;\, x, a))} \dd s,
    \end{equation}
    which, using the previous estimate, yields
    \[
        M_t (u)(x) \leq \sup_{\|a\|\leq M}\bigg\{ \frac{1}{\eps} \int_0^{\eps} \ip{a}{\nabla^B u (y(s;\, x, a))} \dd s - \ip{a}{\nabla^B u(x)} \bigg\}.
    \]
    Taking $\eps \downarrow 0$, using the uniform continuity of $\nabla^B u$, and the fact that $\|y(s;\, x, a)-x\| \to 0$ as $s \downarrow 0$, we find that the right-hand side of the equation above converges to $0$ uniformly.

    \noindent {\bfseries Step 3:}
    Staying in the setting of the previou step, we know that by the Dynamic Programming Principle for $y$, we can find an $\eps^2$-optimal control $\alpha^{\eps}$, i.e., there exists a $\alpha^{\eps} \in \Lambda_M$ such that
    \[
        N_{\eps} (u)(x) \geq - \eps^2 + \int_0^\eps g(\alpha^\eps(s)) \dd s + u(y^{\eps}(\eps)),
    \]
    where we use the shorthand $y^{\eps}(s) = y(s;\, x, \alpha^{\eps})$.
    Now, using an analogous identity to equation \eqref{eq:proof:taylor_u}, we arrive at
    \begin{align}
        M_t (u)(x) &\geq -\eps + \frac1\eps \int_0^\eps g(\alpha^\eps(s)) \dd s + \frac{u(y^{\eps}(\eps)) - u(x)}{\eps} - \Hmin(\nabla^B u (x))\\
        &= -\eps + \frac1\eps \int_0^\eps g(\alpha^\eps(s)) + \ip{\alpha^{\eps}(s)}{\nabla^B u (x)}  - \Hmin(\nabla^B u (x)) \dd s
    \end{align}
    Again using the uniform continuity of $\nabla^B u$, and the fact that $\|y(s;\, x, a)-x\| \to 0$ as $s \downarrow 0$, but now also Step 1 to find that the integrand in the last line is bounded from below by $ \Hmin(\nabla^B u (y^{\eps}(s)))- \Hmin(\nabla^B u (x))$, we find that the right-hand side converges to $0$ uniformly as $\eps \downarrow 0$.
\end{proof}

\subsection{Proof of Theorem \ref{thm:ext_all_lambda}}\label{sec:proof_main_thm}
This section contains the proof of our main theorem, Theorem \ref{thm:ext_all_lambda}. While it is structurally similar to \cite[Theorem 4.167]{FaGoSw17}, we need the theory developed throughout the previous sections for the objects to even be well defined. Furthermore, Propositions \ref{prop:Tlambda_B_diff_UCb} and \ref{lem:resolvent_properties} structurally play a key role within the proof.

\begin{proof}[Proof of Theorem \ref{thm:ext_all_lambda}]
    Recalling the constant $\lambda_0 >0$ and proof of \cite[Theorem 4.6]{BoGo25} and translating it to our notation, we have that, for any $\lambda \geq \lambda_0$, the solution $R(\lambda)(\ell_0)$ is a fixed point of $T_\lambda^{\ell_0}$.
    Now combining the results of Proposition \ref{lem:resolvent_properties}\ref{item:id_of_resolvents} and \ref{item:injective_resolvent} with Proposition \ref{prop:Tlambda_B_diff_UCb}, we have that $R(\lambda) \colon \LUC(\Hbar) \to \LUC^{1, B}(\Hbar)$ is injective and satisfies the resolvent identity. Consequently, we can apply \cite[Chapter VII.4, Theorem 1]{Yo65} 
    to find that there exists a unique linear operator $\cB \colon D(\cB) \subseteq \LUC^{1, B}(\Hbar) \to \LUC(\Hbar)$ such that
    \begin{equation}
        R(\lambda) = (\lambda - \cB)^{-1}
    \end{equation}
    for all $\lambda \geq \lambda_0$.
    Furthermore, using Proposition \ref{lem:resolvent_properties}\ref{item:lipschitz_resolvent} and the fact that $R(\lambda)$ is defined on the entirety of $\LUC(\Hbar)$, we know that $R(\lambda)$ is maximally dissipative and, using \cite[Chapter 1, Theorem 4.5]{Pa83}, that we can extended $R(\lambda)$ with the same properties to all $\lambda >0$.

    Thus, it remains to show that, for any $\lambda > 0$, we have $u = (\lambda - \cB)^{-1}(\psi)$ if and only if $u$ is a fixed point of $T^{\psi}_\lambda$, where $u \in \LUC^{1, B}(\Hbar)$ and $\psi \in \LUC(\Hbar)$.

    Recall that, for any $\lambda^* \geq \lambda_0$ and $\psi \in \LUC(\Hbar)$, $R(\lambda^*)(\psi) = (\lambda^* - \cB)^{-1} (\psi)$ is the unique fixed point of $T^{\psi}_\lambda$. Now let $\lambda>0$ and set $u = (\lambda - \cB)^{-1} (\psi)$. As the resolvent identity \eqref{eq:prop:id_of_resolvents} holds for any $\lambda > 0$, we know that
    \begin{equation}
        u  = ((\lambda + \lambda_0) - \cB)^{-1} (\psi + \lambda_0 u),
    \end{equation}
    which we know to be the unique fixed point of $T^{\psi + \lambda_0 u}_{\lambda + \lambda_0}$.
    As we have shown in equation \eqref{eq:proof:linear_resolvent} that the resolvent identity also holds for $T_\lambda$, we find
    \begin{align}
        u &= T_{\lambda + \lambda_0} \big[ \psi + \lambda_0 u + \Hmin(\nabla^B u) \big]\\
        &= T_{\lambda} \big[ \psi + \lambda_0 u + \Hmin(\nabla^B u) + \big(\lambda - (\lambda + \lambda_0)\big) u\big]\\
        &= T^\psi_\lambda u,
    \end{align}
    i.e., $u$ is a fixed point $T^\psi_\lambda$.
    Note that, since this argument also works in reverse, we have shown the statement.
\end{proof}

\section{Applications}\label{sec:applications}

\subsection{Controlled Stochastic Wave Equation}
\label{sec:wave_application}

This section relies on the framework developed in \cite[Section 6]{BoGo25}, to which we refer for a more in depth discussion. 

We analyze the vibrations of a membrane over a domain $\Omega \subset \mathbb{R}^d$ with fixed boundaries, driven by an internal control $f(t,x)$ and subject to stochastic perturbations. Then, the evolution of the state $u(t,x)$ is governed by the stochastic partial differential equation (SPDE)
\begin{equation}\label{eq:wave_spde}
\begin{cases}
    \partial_{tt} u(t,x) = c^2 \Delta u(t,x) + f(t,x) + \sigma \dd W(t,x) & \text{in } (0,\infty) \times \Omega, \\
    u(t,x) = 0 & \text{on } (0,\infty) \times \partial\Omega, \\
    u(0,x) = u_0(x), \quad \partial_t u(0,x) = v_0(x) & \text{in } \Omega.
\end{cases}
\end{equation}
Here, $c>0$ represents the propagation speed, the distributed control $f(t,x)$ takes values in the space $K = L^2(\Omega;\, \mathbb{C})$, and $W(t,x)$ is a cylindrical Wiener process on $L^2(\Omega;\, \mathbb{C})$. We consider the Hilbert space $H = H^1_0(\Omega;\mathbb{C}) \times L^2(\Omega;\mathbb{C})$ as our state space and endow it with the energy norm $\|(u,v)\|_H^2 = c^2\|\nabla u\|_{L^2}^2 + \|v\|_{L^2}^2$. By setting $X(t) = (u(t,\cdot), \partial_t u(t,\cdot))^T$, the SPDE is recast as an abstract first-order evolution equation on the Hilbert space $H$ with dynamics
\begin{equation}\label{eq:wave_abstract}
\dd X(t) = (A X(t) + B f(t))\dd t + G \dd W(t), \quad X(0) = (u_0, v_0)^T.
\end{equation}

The uncontrolled dynamics are governed by the unbounded operator $A$ with domain $D(A) = (H^2(\Omega;\, \mathbb{C}) \cap H_0^1(\Omega;\, \mathbb{C})) \times H_0^1(\Omega;\, \mathbb{C})$ defined as 
$A = \begin{pmatrix} 0 & I \\ c^2\Delta & 0 \end{pmatrix}$.
As is standard in the literature, cf.\ \cite[Chapter 3]{LaTr00}, $A$ generates a strongly continuous contraction semigroup $e^{tA}$ on $H$, and its eigenfunctions $\Phi_{n}\in H$ with corresponding eigenvalues $\mu_{n}\in \mathbb{C}$ constitute a Riesz basis for $H$.

The control operator $B\colon K\to H$ acts exclusively on the velocity component, i.e., $B$ is zero in the first component and the identity operator on $L^{2}(\Omega;\,\mathbb{C})$ in the second component. We restrict our control strategies to the space of progressively measurable processes taking values in a bounded, closed set $U$
\begin{equation*}\label{eq:admcontr2-wave}
\mathcal{U} \coloneqq \big\{ u\colon[0,\infty)\times \Omega \to U \subseteq K \;\big|\; u \text{ is progressively measurable} \big\}.
\end{equation*}

The noise is introduced via the operator $G\colon\mathbb{R}^{N}\to H$, given by $G =(0, \sigma)^T$, where $\sigma\colon \mathbb{R}^N \to L^2(\Omega;\mathbb{C})$ is a Hilbert--Schmidt operator. 

Our objective is to minimize the following discounted infinite-horizon cost functional over $\mathcal{U}$, given the initial state $x = (u_0, v_0)^T$
\begin{equation}\label{eq:wave_cost}
    J(x;\, f) = \mathbb{E} \left[ \int_0^\infty e^{-\lambda t} \left[ \ell_{0}(X(t)) + \ell_{1}(f(t))\right]dt \right].
\end{equation}

For the running cost on the state, we assume a finite-dimensional structure: $\ell_{0}(x)=\hat \ell_{0}(\cP x)$, where $\cP$ denotes the spectral projection onto the finite-dimensional subspace $V_N = \operatorname{span}\{\Phi_1, \dots, \Phi_N\}$. Moreover, we assume, that $\cP$ and $G$ share the identical image $V_{N}$. To apply our main result, Theorem \ref{thm:ext_all_lambda}, we have to verify that the PEL machinery from Section \ref{sec:struct_prop}, and in particular Hypothesis \ref{ipo-smoothing-lifting}, work in this setting.  
The following proposition guarantees that Hypothesis \ref{ipo-smoothing-lifting} is met.

\begin{proposition}
    Assume that the restriction of the operator $\cP GG^{*}\cP^{*}$ to $V_{N}$ is positive definite. Then, for every $t>0$, the abstract model satisfies the following bound:
    \begin{equation}
        \left\| (\Upsilon^{\cP}_\infty Q_t (\Upsilon^{\cP}_\infty)^*)^{-1/2} \Upsilon^{\cP}_\infty e^{tA}B \right\| \le C t^{-1/2},
    \end{equation}
    where $Q_t = \int_0^t e^{sA}GG^*e^{sA^*}\dd s$ is the covariance operator associated with the system.
\end{proposition}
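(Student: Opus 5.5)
The plan is to reduce the claimed operator bound to a quadratic-form inequality, and then to reduce that inequality to a finite-dimensional controllability-type estimate on the range of $\cP^*$. Write $\Sigma_t \coloneqq \Upsilon^{\cP}_\infty Q_t (\Upsilon^{\cP}_\infty)^*$ and $L_t \coloneqq \Upsilon^{\cP}_\infty e^{tA}B \in \cL(K, L^2_\rho)$.

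\textbf{Step 1 (range inclusion).} By the Douglas range-inclusion lemma (the standard tool behind the Closed Graph argument in Hypothesis \ref{ipo-smoothing-lifting}(i)), the inclusion $\operatorname{Im} L_t \subseteq \operatorname{Im}\Sigma_t^{1/2}$ together with the bound $\|\Sigma_t^{-1/2}L_t\| \le C t^{-1/2}$ is equivalent to
\begin{equation}
    \|L_t^* z\|_K^2 \le C^2 t^{-1} \langle \Sigma_t z, z\rangle_{L^2_\rho} \quad \text{for all } z \in L^2_\rho.
\end{equation}

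\textbf{Step 2 (reduction to $W$).} I will use Proposition \ref{prop-adjoint}(ii) to write both sides through the single vector $w \coloneqq (\Upsilon^{\cP}_\infty)^* z = \int_0^\infty e^{-\rho r} e^{rA^*}\cP^* z(r)\dd r$. This gives
\begin{equation}
    L_t^* z = B^* e^{tA^*} w
    \qquad\text{and}\qquad
    \langle \Sigma_t z,z\rangle = \int_0^t |G^* e^{sA^*} w|^2 \dd s .
\end{equation}
Since $\cP$ is a spectral projection, it commutes with $e^{tA}$. Hence $\cP^*$ commutes with $e^{tA^*}$, and $w$ lies in the finite-dimensional space $W \coloneqq \operatorname{Im}\cP^*$, which is invariant under $e^{sA^*}$. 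The claim therefore reduces to the finite-dimensional inequality
\begin{equation}
    |B^* e^{tA^*} w|^2 \le C^2 t^{-1}\int_0^t |G^* e^{sA^*}w|^2\dd s \quad \text{for all } w\in W .
\end{equation}

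\textbf{Step 3 (coercivity and the estimate).} From the positive definiteness of $\cP GG^*\cP^*$ on $V_N$, and using that $\cP$ and $G$ share the image $V_N$, I will deduce $|G^* v|^2 \ge c_0|v|^2$ for $v \in W$. This requires checking that $W$ and $V_N$ are identified compatibly, which needs some care because $\cP$ is not an orthogonal projection; I will pass through the Riesz basis and its biorthogonal system, so that all norms on these $N$-dimensional spaces are equivalent. On $W$ the semigroup acts diagonally, with eigenvalues $\bar\mu_1,\dots,\bar\mu_N$ in the biorthogonal basis. For the wave operator these eigenvalues are purely imaginary, so $e^{sA^*}|_W$ and its inverse are bounded uniformly in $s \ge 0$ by a constant $c_1$ that depends only on the Riesz constants. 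Consequently
\begin{equation}
    \int_0^t |G^* e^{sA^*}w|^2 \dd s \ge c_0 c_1^{-2}\, t\,|w|^2,
    \qquad
    |B^*e^{tA^*}w| \le \|B\|\, c_1 |w| .
\end{equation}
Combining the two bounds gives the inequality of Step 2 with $C^2 = \|B\|^2 c_1^4/c_0$, uniformly in $t>0$.

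\textbf{Main obstacle.} I expect the hardest part to be Step 1 in the weighted space $L^2_\rho$: making the Douglas lemma argument rigorous there, and verifying that $\Sigma_t^{-1/2}L_t$ is exactly the operator $\widehat\Lambda^{\cP,B}(t)$ of Hypothesis \ref{ipo-smoothing-lifting}. This includes the fact that $\Sigma_t$ is degenerate, so $\Sigma_t^{-1/2}$ must be read as the pseudo-inverse on $\overline{\operatorname{Im}\Sigma_t^{1/2}}$. A second, smaller issue is the compatibility between $\operatorname{Im}\cP^*$ and $V_N$ under the energy inner product, which I would settle through the biorthogonal basis before applying the coercivity estimate.
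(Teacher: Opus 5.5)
Your argument is correct, but it is organized differently from the paper's. The paper never works at the lifted level directly. It quotes \cite[Theorem 6.3]{BoGo25} for the projected estimate $\|(\cP Q_t\cP^*)^{-1/2}\cP e^{tA}B\|\le Ct^{-1/2}$. It then uses the commutation $\cP e^{tA}=e^{tA}\cP$ from \cite[Lemma 6.2]{BoGo25}, together with \cite[Remark 3.26]{BoGo25}, to pass from $\cP$ to $\Upsilon^{\cP}_\infty$. The mechanism is that commutation gives $\Upsilon^{\cP}_\infty=\Upsilon^{\cP}_\infty\cP$, so both $\Sigma_t$ and $L_t$ factor through $\cP Q_t\cP^*$ and $\cP e^{tA}B$.

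You instead apply the Douglas lemma once, directly to $\Sigma_t$ and $L_t$. You use commutation only to get $\operatorname{Im}(\Upsilon^{\cP}_\infty)^*\subseteq\operatorname{Im}\cP^*$, which reduces everything to a finite-dimensional Gramian inequality. You prove that inequality by hand from coercivity of $G^*$ on $W$ and the purely imaginary spectrum.

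The paper's route is modular: the transfer step is model-independent and reusable whenever $\cP$ commutes with the semigroup, and the model-specific estimate is delegated to earlier work. Yours is self-contained, shows exactly where each hypothesis enters, and gives an explicit constant.

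The two obstacles you flag are milder than you expect.
\begin{itemize}
\item The Douglas lemma is a pure Hilbert-space statement and applies verbatim in $L^2_\rho$. The degenerate case is handled by reading $\Sigma_t^{-1/2}L_t$ as the minimal-norm factor with range in $\overline{\operatorname{Im}\Sigma_t^{1/2}}$, which is the standard meaning of $\widehat\Lambda^{\cP,B}(t)$.
\item No biorthogonal basis is needed for coercivity. Since $V_N^\perp=(\operatorname{Im}\cP)^\perp=\ker\cP^*$, the map $\cP^*$ sends $V_N$ bijectively onto $W$. Hence $\langle\cP GG^*\cP^*u,u\rangle=|G^*\cP^*u|^2\ge c|u|^2$ on $V_N$ gives $|G^*v|^2\ge c\|\cP\|^{-2}|v|^2$ on $W$ directly. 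The assumption that $\cP$ and $G$ share an image is not needed for this.
\end{itemize}
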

\begin{proof}
By virtue of \cite[Theorem 6.3]{BoGo25}, an analogous estimate holds true when substituting $\Upsilon^{\cP}_\infty$ with the projection operator $\cP$. Furthermore, \cite[Lemma 6.2]{BoGo25} establishes that the semigroup $e^{tA}$ commutes with $\cP$. Combining this commutation property with the aforementioned weaker estimate directly yields the desired inequality, as detailed in \cite[Remark 3.26]{BoGo25}.
\end{proof}

\subsection{Stochastic Heat Equation with Boundary Control}\label{sec:application}

This section extends the results of \cite[Section 7]{BoGo25}. We refer the reader there for an exhaustive treatment of the topic in this context.

Here, we consider a optimal boundary control problem of a stochastic heat equation. Specifically, we will verify that Hypothesis \ref{ipo-smoothing-lifting} holds, and thus the PEL machinery works. As a consequence, we can then apply Theorem \ref{thm:ext_all_lambda}. For a broader perspective on boundary control problems, we refer to \cite{MoFa13}.

Let $\mathcal{O}\subseteq \mathbb{R}^d$ be an open, bounded, and connected domain with smooth boundary $\partial\mathcal{O}$. We study the evolution of a stochastic heat equation driven by a boundary control:
\begin{equation}\label{eqdiri}
\begin{cases}
    \partial_{t} y (s,\xi) = \Delta y(s,\xi)+\sigma \dd W(s,\xi), & s\in (0,\infty),\; \xi\in \mathcal{O}, \\
    y(0,\xi)=x(\xi),& \xi\in \mathcal{O},\\
    y(s,\xi)= u(s,\xi), & s\in (0,\infty),\; \xi\in \partial\mathcal{O},
\end{cases}
\end{equation}
where $W$ is a cylindrical Wiener process on $L^{2}(\mathcal{O})$, and the initial state and control satisfy $x(\cdot)\in L^{2}(\mathcal{O})$ and $u(s,\cdot)\in L^{2}(\partial \mathcal{O})$ for all $s > 0$, respectively.

We consider the state space $H=L^{2}(\mathcal{O})$. The uncontrolled dynamics are governed by the Dirichlet Laplacian $A = \Delta$ defined on the domain $D(A)=H^{2}(\mathcal{O})\cap H^1_0(\mathcal{O})$. As a self-adjoint operator, $A$ possesses a purely discrete, strictly negative point spectrum $\{-\lambda_n\}_{n\in \mathbb{N}}$, with the classical asymptotic behavior $\lambda_n\sim n^{2/d}$ as $n \to \infty$. Consequently, $H$ admits a complete orthonormal system of eigenfunctions $\{e_n\}_{n\in \mathbb{N}}$. It is a standard result \cite[Chapter 1 Theorem 4.3]{Pa83} that $A$ acts as the infinitesimal generator of an analytic semigroup $e^{tA}$ on $H$.

The control space is defined as $K=L^{2}(\partial \mathcal{O})$. We restrict the admissible control actions to the following set of progressively measurable processes
\begin{equation*}\label{eq:admcontr2-heat}
\mathcal{U}\coloneqq \big\{
u\colon[0,\infty)\times \Omega \to U \subseteq K \;\big|\; u \text{ is progressively measurable}
\big\},
\end{equation*}
where $U$ is a given closed and bounded subset of $K$.

To handle the boundary conditions, we construct an extrapolated state space $\overline{H}$ defined as the topological dual of $V\coloneqq D((-A)^{3/4+\varepsilon})$ for a sufficiently small $\varepsilon>0$. The conversion of boundary data into internal domain forcing relies on the Dirichlet map $\cD \colon L^{2}(\partial\mathcal{O})\rightarrow D((-A)^{1/4-\varepsilon})$, cf.\ \cite{LiMa72}, defined as the unique weak solution to the elliptic boundary value problem
\begin{equation}
    \begin{cases}
        \Delta f(\xi)=0, & \xi\in \mathcal{O}, \\
        f(\xi)=a(\xi), & \xi\in \partial\mathcal{O},
    \end{cases}
\end{equation}
where $a\in L^{2}(\partial\mathcal{O})$. The regularity property $\cD \in \mathcal{L}(L^{2}(\partial\mathcal{O});\, D((-A)^{1/4-\varepsilon}))$ follows from classical elliptic regularity theory, cf.\ \cite[Chapter 3 and Appendix A]{LaTr00}. 

The following definition comes out naturally from the fact that classical solutions to problem \ref{eqdiri} satisfy the following integral equation, see \cite[Proposition C.12]{FaGoSw17}.

\begin{definition}
An adapted process $X(\cdot)$ is a mild solution to \eqref{eqdiri} if the following integral equation holds for all $s\geq 0$:
\begin{equation}\label{mildiri}
X(s)=e^{sA}x-A\int_0^s e^{(s-r)A} \cD u(r)\dd r +\int_0^s e^{(s-r)A}G\dd W(r).
\end{equation}
\end{definition}

As established in, e.g., \cite[Section 7]{BoGo25}, the semigroup $\{e^{tA}\}_{t\geq0}$ admits a unique continuous extension $\{\overline{e^{tA}}\}_{t\geq0}$ on the larger space $\overline{H}$. 
By an analogous duality argument, the fractional operator $(-A)^{3/4+\epsilon}$ is extended to $\overline{(-A)^{3/4+\epsilon}}\in\mathcal{L}(H;\,\overline{H})$.

Setting $-\overline{A}\coloneqq \overline{(-A)^{3/4+\epsilon}}(-A)^{1/4-\epsilon}$, using the commutativity of the semigroup and the fractional power of its generator, as shown in \cite[Chapter 2, Theorem 6.13]{Pa83}, we can rewrite the boundary integral term as 
\begin{align}\label{conticino}
- A\int_0^s e^{(s-r)A}\cD u(r)\dd r &= \int_0^s \overline{(-A)^{3/4+\epsilon}}e^{(s-r)A}(-A)^{1/4-\epsilon} \cD u(r)\dd r \\
 &= \int_0^s e^{(s-r)A}(-\overline{A}) \cD u(r)\dd r.
\end{align}
Combining \eqref{mildiri} and \eqref{conticino} we can reformulate the original boundary control problem \eqref{eqdiri} into an abstract stochastic evolution equation on $\overline{H}$
\begin{equation}\label{eqdiri mild}
\begin{cases}
    \dd X(s)= AX(s)\dd s -\overline{A}\cD u(s)\dd s +G\dd W(s), \quad s\in(0,\infty) \\
    X(0)=x\in H.
\end{cases}
\end{equation}
In this formulation, the unbounded boundary action is replaced by the abstract control operator $B\coloneqq -\overline{A}\cD$, which is a bounded operator from $K$ into $\overline{H}$. By virtue of Definition \ref{mildiri}, the trajectory $X(t;\, x,u)$ is guaranteed to remain in $H$ for all $t\geq0$, given $x\in H$ and $u\in \mathcal{U}$.

Our aim is then to minimize an infinite-horizon expected discounted cost. For a given initial state $x \in H$, we seek an admissible strategy in $\mathcal{U}$ to minimize, i.e., we consider the value function
\begin{equation}\label{costoastratto}
V(x) = \inf_{u \in \cU} J(x;\, u)= \inf_{u \in \cU} \mathbb{E} \left[\int_0^\infty e^{-\lambda s} \left[\ell_0(X(s))+\ell_1(u(s))\right]\dd s\right].
\end{equation}
We model the noise covariance structure via the fractional operator $G=(-A)^{-\beta}$ for some parameter $\beta\ge 0$. The corresponding trace class covariance operator, needed for the verification of Hypotesis \ref{ipo-smoothing-lifting}, takes the form
\begin{equation}\label{cov-heat}
Q_t=\int_0^t (-A)^{-2\beta}e^{2sA}\dd s = (-A)^{-2\beta-1}(I-e^{2tA}).
\end{equation}

Additionally, the following assumptions are needed for the running costs.
\begin{hypothesis}\label{hp:BCcost}
We assume that
\begin{enumerate}[(i)]
    \item there exists a finite collection of linearly independent vectors $\{v_{1},\dots,v_{n}\}\subset D((-A)^{\alpha})$ with $\alpha>\frac{1}{4}+\epsilon$ and $\cP$ being the projection onto the space spanned by $\{v_{1},\dots,v_{n}\}$. Furthermore, the state cost $\ell_{0}\colon H\to\mathbb{R}$ is a continuous and bounded map $\bar\ell_{0}\in C_{b}(\mathrm{im}(\cP))$ concatenated with the projection $\cP$ such that
    \begin{equation}
    \ell_{0}(x)= \bar\ell_0 \circ \cP(x).
    \end{equation}
    
    \item The control cost $\ell_{1} \colon U\to\mathbb{R}$ is measurable and bounded from below.
\end{enumerate}
\end{hypothesis}
The specific structure of $\ell_0$ together with Proposition \ref{prop-adjoint} \ref{item:prop-adjoint} guarantees that it belongs to the class of liftable functions $\mathcal{S}^P_\infty(\overline{H})$. 
This property is needed for ensuring the partial smoothing results.

\begin{proposition}
Consider the Ornstein--Uhlenbeck semigroup $\{R_{t}\}_{t\geq0}$ associated with the uncontrolled dynamics of \eqref{eqdiri mild}. Given the abstract boundary operator $B = -\overline{A}\cD$ and a projection operator $\cP$ onto the subspace generated by $\{v_{1},\dots,v_{n}\}$, Hypothesis \ref{ipo-smoothing-lifting} holds for a suitable exponent $\gamma\in(\frac{3}{4},1)$.
\end{proposition}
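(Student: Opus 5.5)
The plan is to follow the wave-equation case: first establish the estimate with the finite-dimensional projection $\cP$ in place of the lifting $\Upsilon^{\cP}_\infty$, and then transfer it to the lifted operator. Concretely, I would aim for
\begin{equation}
\big\| (\cP Q_t \cP^*)^{-1/2}\cP \overline{e^{tA}} B \big\|_{\cL(K,H)} \le C\,(t^{-\gamma}\vee 1), \qquad \gamma = \tfrac34+\epsilon+\delta<1,
\end{equation}
where $\delta>0$ is small. This is essentially \cite[Section 7]{BoGo25}.

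\textbf{Step 1: numerator bound.} Write $\cP x=\sum_{i,j} g^{ij}\langle x,v_j\rangle v_i$ with the inverse Gram matrix $(g^{ij})$. Then $\cP\overline{e^{tA}}Bk$ is determined by the scalars
\begin{equation}
\langle \overline{e^{tA}}(-\overline A)\cD k, v_j\rangle = \langle (-A)^{1/4-\epsilon}\cD k,\ (-A)^{3/4+\epsilon}e^{tA}v_j\rangle .
\end{equation}
Since $v_j\in D((-A)^\alpha)$, analyticity gives $\|(-A)^{3/4+\epsilon}e^{tA}v_j\|\le C t^{-(3/4+\epsilon-\alpha)^+}\|(-A)^\alpha v_j\|$. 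Hence $\|\cP\overline{e^{tA}}B\|\le C(t^{-\gamma_0}\vee 1)$ with $\gamma_0=(3/4+\epsilon-\alpha)^+<1/2$. The same argument, applied on $\overline H$, also yields the growth condition $\|\overline{\cP e^{tA}}x\|\le Ce^{\omega t}t^{-\eta}\|x\|_{\overline H}$ with $\eta<1/2$, which the lifting requires.

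\textbf{Step 2: denominator bound.} $\cP Q_t\cP^*$ is a symmetric matrix on the finite-dimensional space $\operatorname{im}\cP$, with entries
\begin{equation}
\langle Q_t v_i,v_j\rangle=\sum_n \lambda_n^{-2\beta-1}(1-e^{-2t\lambda_n})\langle v_i,e_n\rangle\langle v_j,e_n\rangle .
\end{equation}
For small $t$ one has $1-e^{-2t\lambda_n}\ge c\min(t\lambda_n,1)$. Because the $v_i$ are linearly independent and $Q_t$ is injective, the smallest eigenvalue is bounded below by $c\,t$ for $t\le 1$ and is bounded away from zero for $t\ge1$, by monotonicity in $t$. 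Therefore $\|(\cP Q_t\cP^*)^{-1/2}\|\le C(t^{-1/2}\vee 1)$.

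Combining Steps 1 and 2 gives exponent $\gamma_0+1/2$. This is at most $3/4+\epsilon+\ldots$, and can be taken in $(3/4,1)$ by enlarging it if necessary, since $(t^{-a}\vee1)\le (t^{-b}\vee 1)$ for $a\le b$.

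\textbf{Step 3: transfer to the lifting (the main obstacle).} In the wave case, commutation of $\cP$ with $e^{tA}$ made this step immediate via \cite[Remark 3.26]{BoGo25}. Here the $v_i$ are not eigenvectors, so the transfer is not automatic. I would use the characterization
\begin{equation}
\|\Sigma_t^{-1/2}y\|=\sup\big\{\langle y,z\rangle : \langle \Sigma_t z,z\rangle\le1\big\}, \qquad \Sigma_t=\Upsilon^{\cP}_\infty Q_t(\Upsilon^{\cP}_\infty)^*,
\end{equation}
together with $\Upsilon^{\cP}_\infty \overline{e^{tA}} = S_t\Upsilon^{\cP}_\infty$ from Lemma \ref{lemma:Tlambda_in_Sp}.

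The vector to control is $y=\Upsilon^{\cP}_\infty \overline{e^{tA}}Bk$, i.e.\ the path $s\mapsto \overline{\cP e^{(t+s)A}}Bk$. The goal is to exhibit a preimage: write $\overline{e^{tA}}Bk = Q_t^{1/2}h$, or more modestly find $h$ with $\Upsilon^{\cP}_\infty \overline{e^{tA}}Bk=\Upsilon^{\cP}_\infty Q_t^{1/2}h$ and $\|h\|\le C t^{-\gamma}\|k\|$. Since $\operatorname{Im}\Sigma_t^{1/2}=\operatorname{Im}\Upsilon^{\cP}_\infty Q_t^{1/2}$, such an $h$ bounds the norm. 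In the spectral basis, $Q_t^{1/2}$ acts diagonally with eigenvalues $\lambda_n^{-\beta-1/2}(1-e^{-2t\lambda_n})^{1/2}$. Since $\overline{e^{tA}}Bk$ has coefficients $\lambda_n^{3/4+\epsilon}e^{-t\lambda_n}\langle(-A)^{1/4-\epsilon}\cD k,e_n\rangle$, the full preimage would cost a factor $\sup_n\lambda_n^{\beta+5/4+\epsilon}e^{-t\lambda_n}(1-e^{-2t\lambda_n})^{-1/2}\sim t^{-(\beta+5/4+\epsilon)}$, which is too singular. This is why only the projected directions can be used.

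To exploit this, I would reduce to the finite-dimensional range by choosing $h$ in the finite-dimensional space $Q_t^{1/2}\,\mathrm{span}\{(-A)^{\cdot}v_i\}$. The identity $\overline{\cP e^{sA}}\,\overline{e^{tA}}=\overline{\cP e^{sA}}\,\overline{e^{tA/2}}\,\overline{e^{tA/2}}$ lets half the time be spent on smoothing and half on matching. If this matching fails in general, I would fall back on the structure assumed in \cite[Section 7]{BoGo25} and cite its lifted estimate there directly, verifying only that $\gamma<1$.
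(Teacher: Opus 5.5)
Your Steps 1--2 are fine, but they only bound the \emph{projected} operator $(\cP Q_t\cP^*)^{-1/2}\cP\overline{e^{tA}}B$, which is weaker than what Hypothesis \ref{ipo-smoothing-lifting} asks for. Everything therefore rests on Step 3, and that step has a genuine gap. The finite-dimensional matching cannot work. Your fallback, citing \cite[Section 7]{BoGo25}, is exactly the paper's one-line proof via \cite[Proposition 7.6]{BoGo25}, so it contributes no argument of your own.

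The matching fails because there is no freedom in $h$. Let $\Pi_\mu$ be the spectral projection of $-A$ onto the eigenspace of $\mu$. Then $\langle e^{sA}x,v_j\rangle=\sum_\mu e^{-s\mu}\langle\Pi_\mu x,v_j\rangle$ with absolutely summable coefficients. By uniqueness of Dirichlet series, $\Upsilon^{\cP}_\infty x=0$ if and only if $\langle\Pi_\mu x,v_j\rangle=0$ for \emph{every} $\mu$ and $j$. So the equation $\Upsilon^{\cP}_\infty Q_t^{1/2}h=\Upsilon^{\cP}_\infty\overline{e^{tA}}Bk$ imposes one block of constraints per eigenvalue in the spectral support of the $v_j$. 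That is infinitely many blocks unless the $v_j$ lie in finitely many eigenspaces. Consequently the minimal-norm preimage is completely determined. With $W_\mu=\operatorname{span}_j\{\Pi_\mu v_j\}$ and $q_\mu(t)=\tfrac12\mu^{-2\beta-1}(1-e^{-2t\mu})$,
\begin{equation}
\big\|\widehat\Lambda^{\cP,B}(t)k\big\|_{L^2_\rho}^2=\sum_{\mu}e^{-2t\mu}\,q_\mu(t)^{-1}\,\big\|\langle Bk,\cdot\rangle|_{W_\mu}\big\|_{W_\mu^*}^2 .
\end{equation}
Splitting $t$ into halves changes nothing in this formula.

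The formula also shows that Step 3 cannot be rescued under Hypothesis \ref{hp:BCcost} alone.
\begin{itemize}
\item Take a unit eigenfunction $w\in W_\mu$. Green's formula gives $\langle Bk,w\rangle=-\langle k,\partial_\nu w\rangle_{L^2(\partial\mathcal{O})}$.
\item Rellich's identity $\int_{\partial\mathcal{O}}(\xi\cdot\nu)|\partial_\nu w|^2\dd\sigma=2\mu$ gives $\|\partial_\nu w\|^2_{L^2(\partial\mathcal{O})}\gtrsim\mu$.
\item Choosing $k=\partial_\nu w/\|\partial_\nu w\|$ and $t=1/\mu$ then yields $\|\widehat\Lambda^{\cP,B}(1/\mu)\|_{\cL(K,L^2_\rho)}\gtrsim\mu^{\beta+1}$.
\end{itemize}
Suppose some $v_j$ has infinitely many nonzero spectral components, which is the generic case for $v_j\in D((-A)^\alpha)$. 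Then no $\gamma<1$ is possible, even though your projected exponent $\tfrac12+(\tfrac34+\epsilon-\alpha)^+$ lies below $1$. So the lifted bound does not follow from the projected one by any argument.

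What is missing is a structural restriction: the $v_j$ must lie in a finite sum of eigenspaces. This includes the commuting case used for the wave equation. Under that restriction the displayed sum has finitely many terms, each $\lesssim t^{-1}\|k\|^2$ for $t\le 1$. This gives $\gamma=\tfrac12$ directly, which you may then enlarge into $(\tfrac34,1)$. Whatever \cite[Proposition 7.6]{BoGo25} uses must supply this extra structure. As it stands, your proposal does not establish the lifted estimate.
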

\begin{proof}
The proof follows as a direct consequence of the results established in \cite[Proposition 7.6]{BoGo25}.
\end{proof}

In conclusion, the derivation above confirms that the abstract hypotheses are fully satisfied for the stochastic heat equation under boundary control and, consequently, we can apply Theorem \ref{thm:ext_all_lambda}.

\printbibliography

@book {Pa83,
    AUTHOR = {Pazy, A.},
     TITLE = {Semigroups of linear operators and applications to partial
              differential equations},
    SERIES = {Applied Mathematical Sciences},
    VOLUME = {44},
 PUBLISHER = {Springer-Verlag, New York},
      YEAR = {1983},
     PAGES = {viii+279},
      ISBN = {0-387-90845-5},
   MRCLASS = {47D05 (34Gxx 35Fxx 35Gxx 47H20)},
  MRNUMBER = {710486},
MRREVIEWER = {H.\ O.\ Fattorini},
       DOI = {10.1007/978-1-4612-5561-1},
       URL = {https://doi.org/10.1007/978-1-4612-5561-1},
}

@book{DPZa14,
	AUTHOR = {Da Prato, Giuseppe and Zabczyk, Jerzy},
	TITLE = {Stochastic equations in infinite dimensions},
	SERIES = {Encyclopedia of Mathematics and its Applications},
	VOLUME = {152},
	EDITION = {Second},
	PUBLISHER = {Cambridge University Press, Cambridge},
	YEAR = {2014},
	PAGES = {xviii+493},
	ISBN = {978-1-107-05584-1},
	DOI = {10.1017/CBO9781107295513},
	URL = {https://doi.org/10.1017/CBO9781107295513},
}

@book {FaGoSw17,
	AUTHOR = {Fabbri, Giorgio and Gozzi, Fausto and Swiech, Andrzej},
	TITLE = {Stochastic optimal control in infinite dimension},
	SERIES = {Probability Theory and Stochastic Modelling},
	VOLUME = {82},
	NOTE = {Dynamic programming and HJB equations,
	With a contribution by Marco Fuhrman and Gianmario Tessitore},
	PUBLISHER = {Springer, Cham},
	YEAR = {2017},
	PAGES = {xxiii+916},
	ISBN = {978-3-319-53067-3},
	DOI = {10.1007/978-3-319-53067-3},
	URL = {https://doi.org/10.1007/978-3-319-53067-3},
}

@article {GoMa23,
    AUTHOR = {Gozzi, Fausto and Masiero, Federica},
     TITLE = {Stochastic control problems with unbounded control operators:
              solutions through generalized derivatives},
   JOURNAL = {SIAM J. Control Optim.},
  FJOURNAL = {SIAM Journal on Control and Optimization},
    VOLUME = {61},
      YEAR = {2023},
    NUMBER = {2},
     PAGES = {586--619},
      ISSN = {0363-0129,1095-7138},
   MRCLASS = {93E20 (35R15 47D07 49L20 60H20 93C23)},
  MRNUMBER = {4577194},
       DOI = {10.1137/22M1474679},
       URL = {https://doi.org/10.1137/22M1474679},
}

@article {GoMa25,
    AUTHOR = {Gozzi, Fausto and Masiero, Federica},
     TITLE = {Lifting partial smoothing to solve {HJB} equations and
              stochastic control problems},
   JOURNAL = {SIAM J. Control Optim.},
  FJOURNAL = {SIAM Journal on Control and Optimization},
    VOLUME = {63},
      YEAR = {2025},
    NUMBER = {3},
     PAGES = {1515--1559},
      ISSN = {0363-0129,1095-7138},
   MRCLASS = {93E20 (35R15 47D07 49L20)},
  MRNUMBER = {4902119},
       DOI = {10.1137/23M1578450},
       URL = {https://doi.org/10.1137/23M1578450},
}

@article {GoMa17,
    AUTHOR = {Gozzi, Fausto and Masiero, Federica},
     TITLE = {Stochastic optimal control with delay in the control {I}:
              {S}olving the {HJB} equation through partial smoothing},
   JOURNAL = {SIAM J. Control Optim.},
  FJOURNAL = {SIAM Journal on Control and Optimization},
    VOLUME = {55},
      YEAR = {2017},
    NUMBER = {5},
     PAGES = {2981--3012},
      ISSN = {0363-0129,1095-7138},
   MRCLASS = {93E20 (35K55 47D07 49L20 60H20)},
  MRNUMBER = {3702860},
MRREVIEWER = {Annalisa\ Cesaroni},
       DOI = {10.1137/16M1070128},
       URL = {https://doi.org/10.1137/16M1070128},
}

@book {DPZa02,
    AUTHOR = {Da Prato, Giuseppe and Zabczyk, Jerzy},
     TITLE = {Second order partial differential equations in {H}ilbert
              spaces},
    SERIES = {London Mathematical Society Lecture Note Series},
    VOLUME = {293},
 PUBLISHER = {Cambridge University Press, Cambridge},
      YEAR = {2002},
     PAGES = {xvi+379},
      ISBN = {0-521-77729-1},
   MRCLASS = {47D06 (28C20 35R15 46G12 47D07 47N20 93C20)},
  MRNUMBER = {1985790},
MRREVIEWER = {Sandra\ Cerrai},
       DOI = {10.1017/CBO9780511543210},
       URL = {https://doi.org/10.1017/CBO9780511543210},
}

@book {LaTr00,
    AUTHOR = {Lasiecka, Irena and Triggiani, Roberto},
     TITLE = {Control theory for partial differential equations: continuous
              and approximation theories. {I}},
    SERIES = {Encyclopedia of Mathematics and its Applications},
    VOLUME = {74},
      NOTE = {Abstract parabolic systems},
 PUBLISHER = {Cambridge University Press, Cambridge},
      YEAR = {2000},
     PAGES = {xxii+644+I4},
      ISBN = {0-521-43408-4},
   MRCLASS = {93-02 (35B37 47D06 47N70 49N10 93C20 93C25)},
  MRNUMBER = {1745475},
MRREVIEWER = {Luciano\ Pandolfi},
}

@book {LiMa72,
    AUTHOR = {Lions, J.-L. and Magenes, E.},
     TITLE = {Non-homogeneous boundary value problems and applications.
              {V}ol. {I}},
    SERIES = {Die Grundlehren der mathematischen Wissenschaften},
    VOLUME = {Band 181},
      NOTE = {Translated from the French by P. Kenneth},
 PUBLISHER = {Springer-Verlag, New York-Heidelberg},
      YEAR = {1972},
     PAGES = {xvi+357},
   MRCLASS = {35JXX (35KXX 35LXX 46E35)},
  MRNUMBER = {350177},
}

@article{MoFa13,
    author = {Moura, Scott J. and Fathy, Hosam K.},
    title = {Optimal Boundary Control of Reaction–Diffusion Partial Differential Equations via Weak Variations},
    journal = {Journal of Dynamic Systems, Measurement, and Control},
    volume = {135},
    number = {3},
    pages = {no. 034501},
    year = {2013},
    month = {02},
    issn = {0022-0434},
    doi = {10.1115/1.4023071},
    url = {https://doi.org/10.1115/1.4023071}
}

@misc{BoGo25,
      title={Lifting and partial smoothing for stationary {HJB} equations and related control problems in infinite dimensions}, 
      author={Gabriele Bolli and Fausto Gozzi},
      year={2025},
      eprint={2510.25894},
      archivePrefix={arXiv},
      primaryClass={math.OC},
      url={https://arxiv.org/abs/2510.25894}, 
}

@book {barbu2010,
    AUTHOR = {Barbu, Viorel},
     TITLE = {Nonlinear differential equations of monotone types in {B}anach
              spaces},
    SERIES = {Springer Monographs in Mathematics},
 PUBLISHER = {Springer, New York},
      YEAR = {2010},
     PAGES = {x+272},
      ISBN = {978-1-4419-5541-8},
   MRCLASS = {34-02 (34G20 34G25 35A16 47J35 47N20)},
  MRNUMBER = {2582280},
MRREVIEWER = {Jean\ Mawhin},
       DOI = {10.1007/978-1-4419-5542-5},
       URL = {https://doi.org/10.1007/978-1-4419-5542-5},
}

@misc{bodefe26,
      title={Optimal control of stochastic Volterra integral equations with completely monotone kernels and stochastic differential equations on Hilbert spaces with unbounded control and diffusion operators}, 
      author={Gabriele Bolli and Filippo de Feo},
      year={2026},
      eprint={2602.17578},
      archivePrefix={arXiv},
      primaryClass={math.OC},
      url={https://arxiv.org/abs/2602.17578}, 
}

@article{MaTe22,
 author = {Masiero, Federica and Tessitore, Gianmario},
 title = {Partial smoothing of delay transition semigroups acting on special functions},
 fjournal = {Journal of Differential Equations},
 journal = {J. Differ. Equations},
 issn = {0022-0396},
 volume = {316},
 pages = {599--640},
 year = {2022},
 language = {English},
 doi = {10.1016/j.jde.2022.01.054},
 zbMATH = {7486725},
 Zbl = {1491.60087}
}

@article{CadP91,
 author = {Cannarsa, Piermarco and da Prato, Giuseppe},
 title = {Second-order {Hamilton}-{Jacobi} equations in infinite dimensions},
 fjournal = {SIAM Journal on Control and Optimization},
 journal = {SIAM J. Control Optim.},
 issn = {0363-0129},
 volume = {29},
 number = {2},
 pages = {474--492},
 year = {1991},
 language = {English},
 doi = {10.1137/0329026},
 url = {hdl.handle.net/11384/91967},
 zbMATH = {13523},
 Zbl = {0737.49020}
}

@incollection{CadP92,
 author = {Cannarsa, P. and Da Prato, G.},
 title = {Direct solution of a second order {Hamilton}-{Jacobi} equation in {Hilbert} spaces},
 booktitle = {Stochastic partial differential equations and applications. Proceedings of the third meeting on stochastic partial differential equations and applications held at Villa Madruzzo, Trento, Italy, January 1990},
 isbn = {0-582-10051-8},
 pages = {72--85},
 year = {1992},
 publisher = {Harlow: Longman Scientific \& Technical; New York: Wiley},
 language = {English},
 zbMATH = {503099},
 Zbl = {0805.49016}
}

@article{Go96,
 author = {Gozzi, Fausto},
 title = {Global regular solutions of second order {Hamilton}-{Jacobi} equations in {Hilbert} spaces with locally {Lipschitz} nonlinearities},
 fjournal = {Journal of Mathematical Analysis and Applications},
 journal = {J. Math. Anal. Appl.},
 issn = {0022-247X},
 volume = {198},
 number = {2},
 pages = {399--443},
 year = {1996},
 language = {English},
 doi = {10.1006/jmaa.1996.0090},
 zbMATH = {902509},
 Zbl = {0858.35129}
}

@article{ChMe97,
 author = {Chow, Pao-Liu and Menaldi, Jose-Luis},
 title = {Infinite-dimensional {Hamilton}-{Jacobi}-{Bellman} equations in {Gauss}-{Sobolev} spaces},
 fjournal = {Nonlinear Analysis. Theory, Methods \& Applications},
 journal = {Nonlinear Anal., Theory Methods Appl.},
 issn = {0362-546X},
 volume = {29},
 number = {4},
 pages = {415--426},
 year = {1997},
 language = {English},
 doi = {10.1016/S0362-546X(96)00043-0},
 url = {digitalcommons.wayne.edu/mathfrp/39},
 zbMATH = {1036478},
 Zbl = {0890.49012}
}

@article{GoGo06,
 author = {Goldys, B. and Gozzi, F.},
 title = {Second order parabolic {Hamilton}-{Jacobi}-{Bellman} equations in {Hilbert} spaces and stochastic control: {{\(L^{2}_{\mu}\)}} approach},
 fjournal = {Stochastic Processes and their Applications},
 journal = {Stochastic Processes Appl.},
 issn = {0304-4149},
 volume = {116},
 number = {12},
 pages = {1932--1963},
 year = {2006},
 language = {English},
 doi = {10.1016/j.spa.2006.05.006},
 zbMATH = {5083386},
 Zbl = {1111.49021}
}

@article{FuTe02,
 author = {Fuhrman, Marco and Tessitore, Gianmario},
 title = {Nonlinear {Kolmogorov} equations in infinite dimensional spaces: the backward stochastic differential equations approach and applications to optimal control},
 fjournal = {The Annals of Probability},
 journal = {Ann. Probab.},
 issn = {0091-1798},
 volume = {30},
 number = {3},
 pages = {1397--1465},
 year = {2002},
 language = {English},
 doi = {10.1214/aop/1029867132},
 zbMATH = {1906090},
 Zbl = {1017.60076}
}

@article{FuTe04,
 author = {Fuhrman, Marco and Tessitore, Gianmario},
 title = {Infinite horizon backward stochastic differential equations and elliptic equations in {Hilbert} spaces.},
 fjournal = {The Annals of Probability},
 journal = {Ann. Probab.},
 issn = {0091-1798},
 volume = {32},
 number = {1B},
 pages = {607--660},
 year = {2004},
 language = {English},
 doi = {10.1214/aop/1079021459},
 zbMATH = {2100729},
 Zbl = {1046.60061}
}

@article{FuTe2002,
 author = {Fuhrman, Marco and Tessitore, Gianmario},
 title = {The {Bismut}-{Elworthy} formula for backward {SDE}'s and applications to nonlinear {Kolmogorov} equations and control in infinite dimensional spaces},
 fjournal = {Stochastics and Stochastics Reports},
 journal = {Stochastics Stochastics Rep.},
 issn = {1045-1129},
 volume = {74},
 number = {1-2},
 pages = {429--464},
 year = {2002},
 language = {English},
 doi = {10.1080/104S1120290024856},
 zbMATH = {1886381},
 Zbl = {1013.60049}
}

@book {Yo65,
    AUTHOR = {Yosida, K\^osaku},
     TITLE = {Functional analysis},
    SERIES = {Die Grundlehren der mathematischen Wissenschaften},
    VOLUME = {Band 123},
 PUBLISHER = {Academic Press, Inc., New York; Springer-Verlag, Berlin},
      YEAR = {1965},
     PAGES = {xi+458},
   MRCLASS = {47.00 (46.00)},
  MRNUMBER = {180824},
MRREVIEWER = {Ivan\ Singer},
}
\end{document}